\theoremstyle{plain}
\newtheorem{theorem}{Theorem}[section]
\newtheorem{proposition}[theorem]{Proposition}
\newtheorem{lemma}[theorem]{Lemma}
\newtheorem{corollary}[theorem]{Corollary}
\theoremstyle{definition}
\newtheorem{definition}[theorem]{Definition}
\theoremstyle{remark}
\newtheorem{remark}[theorem]{Remark}
\DeclareMathOperator\arctanh{arctanh}
\newcommand{\nc}{\normalcolor}
\newcommand{\mm}{\mathfrak m}
\newcommand{\R}{\mathbb R}
\newcommand{\Per}{\mathrm{Per}}
\begin{document}
\title[Sharp Cheeger-Buser type inequalities in $ \mathsf{RCD}(K,\infty)$ spaces]{Sharp Cheeger-Buser type inequalities in $ \mathsf{RCD}(K,\infty)$ spaces}
\author{Nicol\`o De Ponti}\thanks{Nicol\`o De Ponti: Dipartimento di Matematica ``Casorati'',  Universit\'a degli Studi di Pavia,  Italy. \\email: nicolo.deponti01@universitadipavia.it} \author{Andrea Mondino}\thanks{Andrea Mondino: University of Oxford, Mathematical Institute,  United Kingdom. \\ email: Andrea.Mondino@maths.ox.ac.uk}

\begin{abstract}
The goal of the paper is to sharpen and generalise bounds involving Cheeger's isoperimetric constant $h$ and the first eigenvalue $\lambda_{1}$ of the Laplacian. 
\\A celebrated lower bound of $\lambda_{1}$ in terms of $h$, $\lambda_{1}\geq h^{2}/4$, was proved by Cheeger in 1970 for smooth Riemannian manifolds. An upper bound on  $\lambda_{1}$ in terms of $h$ was established by Buser in 1982 (with dimensional constants) and improved (to a dimension-free  estimate) by Ledoux in 2004 for smooth Riemannian manifolds with Ricci curvature bounded below.
\\ The goal of the paper is twofold. First: we sharpen the inequalities obtained by Buser and Ledoux obtaining a dimension-free sharp Buser inequality for spaces with (Bakry-\'Emery weighted) Ricci  curvature bounded below by $K\in \R$ (the inequality is sharp for $K>0$ as equality is obtained on the Gaussian space). Second: all of our results hold in the higher generality of (possibly non-smooth) metric measure spaces with Ricci curvature bounded below in synthetic sense, the so-called $ \mathsf{RCD}(K,\infty)$ spaces.
\end{abstract}

\maketitle

\section{Introduction}
Throughout the paper  $(X, \mathsf{d})$ will be a complete metric space and $\mm$ will be a non-negative Borel measure on $X$, finite on bounded subsets. The triple $(X, \mathsf{d}, \mm)$ is called \emph{metric measure space}, m.m.s. for short.
We denote by $\mathsf{Lip}(X)$ the space of real-valued Lipschitz functions over $X$ and we write $f\in \mathsf{Lip}_b(X)$ if $f\in \mathsf{Lip}(X)$ and $f$ is bounded with bounded support. Given $f\in \mathsf{Lip}(X)$ its slope $|\nabla f|(x)$ at $x\in X$ is defined by
\begin{equation}
|\nabla f|(x):=\limsup_{y\rightarrow x} \frac{|f(y)-f(x)|}{\mathsf{d}(y,x)},
\end{equation}  
with the convention $|\nabla f|(x)=0$ if $x$ is an isolated point.
The first non-trivial eigenvalue of the Laplacian is characterized as follows:
\begin{itemize}
\item If $\mm(X)<\infty$, the non-zero constant functions are in $L^2(X,\mm)$ and are eigenfunctions of  the Laplacian with eigenvalue $0$. In this case,  we set 
\begin{equation}\label{eq:defla1Intro}
\lambda_1= \inf\bigg\{\frac{\int_X |\nabla f|^2 d\mm}{\int_X |f|^2d\mm}: \ 0\not\equiv f\in \mathsf{Lip}_{b}(X), \int_X f\, d\mm=0\bigg\}.
\end{equation}
\item When $\mm(X)=\infty$, $0$  may not  be an eigenvalue of the Laplacian. Thus,  we set
\begin{equation}\label{eq:defla0Intro}
\lambda_0= \inf\bigg\{\frac{\int_X |\nabla f|^2 d\mm}{\int_X |f|^2d\mm}: \  \ 0\not\equiv f\in \mathsf{Lip}_b(X) \bigg\}.
\end{equation}
\end{itemize}
At this level of generality, the spectrum of the Laplacian may not be discrete (see Remark \ref{rem:SpectrDeltaDiscrete} for more details); in any case the definitions \eqref{eq:defla1Intro} and \eqref{eq:defla0Intro} make sense, and one can investigate bounds on $\lambda_{1}$ and $\lambda_{0}$.
\\Note  that $\lambda_0$ may be zero (for instance if $\mm(X)<\infty$ or if $(X,\mathsf{d},\mm)$ is the Euclidean space $\mathbb{R}^d$ with the Lebesgue measure) but there are examples when  $\lambda_0> 0$:  for instance in the Hyperbolic plane $\lambda_{0}=1/4$ and more generally on an $n$-dimensional simply-connected Riemannian manifold with sectional curvatures bounded above by $k<0$ it holds $\lambda_{0}\geq (n-1)^{2} |k|  /4$ (see \cite{McKean}). 
\\

Given a Borel subset $A\subset X$ with $\mm(A)<\infty$, the \textit{perimeter} $\mathrm{Per}(A)$ is defined as follows (see for instance \cite{MirandaJr}):
\begin{equation*}
\mathrm{Per}(A):=\inf\bigg\{\liminf_{n\rightarrow \infty}\int_X |\nabla f_n|d\mm: f_n\in \mathsf{Lip}_b(X), f_n\rightarrow \chi_A \ \mathrm{in} \ L^1(X,\mm)\bigg\}.
\end{equation*}
In 1970, Cheeger \cite{ChIn} introduced an isoperimetric constant, now known as \emph{Cheeger constant}, to bound from below the first eigenvalue of the Laplacian.
The \textit{Cheeger constant} of the metric measure space $(X,\mathsf{d},\mm)$ is defined by 
\begin{equation}\label{eq:defChConst}
h(X):=
\begin{cases}
\inf  \left\{\frac{\mathrm{Per}(A)}{\mm(A)}\, :\, A\subset X \text{ Borel subset with $\mm(A)\leq \mm(X)/2$} \right\} & \text {if } \mm(X)<\infty \\
\inf  \left\{\frac{\mathrm{Per}(A)}{\mm(A)}\, :\, A\subset X \text{ Borel subset with $\mm(A)<\infty$} \right\} &\text {if } \mm(X)=\infty.
 \end{cases}
\end{equation}
The lower bound obtained in \cite{ChIn} for compact Riemannian manifolds, now known as  \emph{Cheeger inequality}, reads as 
\begin{equation}\label{eq:ChIn}
\lambda_{1}\geq \frac{1}{4} h(X)^{2}.
\end{equation}
As proved by Buser \cite{Buser78}, the constant $1/4$ in \eqref{eq:ChIn} is  optimal  in the following sense: for any $h > 0$ and $\varepsilon>0$, there exists a closed  (i.e. compact without boundary)  two-dimensional Riemannian manifold $(M,g)$ with $h(M)=h$ and such that $\lambda_{1}\leq  \frac 1 4 h(M)^{2}+\varepsilon$.
\\The paper  \cite{ChIn} is in the framework of smooth  Riemannian manifolds; however, the stream of arguments (with some care) extends to general metric measure spaces. For the reader's  convenience, we give a self-contained proof of \eqref{eq:ChIn} for m.m.s. in the Appendix (see Theorem \ref{thm:Cheeger}).
\\Cheeger's inequality \eqref{eq:ChIn} revealed to be extremely useful in proving lower bounds on the first eigenvalue of the Laplacian in terms of the isoperimetric constant $h$. It was thus an important discovery by Buser \cite{Buser} that also an upper bound for $\lambda_{1}$ in terms of $h$ holds, where the inequality explicitly depends on the lower bound on the Ricci curvature of the smooth Riemannian manifold. More precisely, Buser \cite{Buser}  proved that for any compact Riemannian manifold of dimension $n$ and ${\rm Ric}\geq K$, $K\leq 0$ it holds
\begin{equation}\label{eq:BuserConst}
\lambda_1\leq 2\sqrt{-(n-1)K}h+10h^2.
\end{equation}
Note that the constant here is dimension-dependent. For a complete connected Riemannian manifold with ${\rm Ric}\geq K$, $K\leq 0$, Ledoux \cite{Ledoux} remarkably  showed that the constant can be chosen to be independent of the dimension:
\begin{equation}\label{eq:LedouxConst}
\lambda_1\leq \max\{6\sqrt{-K}h,36h^2\}. 
\end{equation}

The goal of the present work is twofold: 
\begin{enumerate}
\item The main results of the paper (Theorem \ref{thm:MainImpl} and Corollary \ref{cor:explicitbounds}) improve the constants in both the Buser-type inequalities \eqref{eq:BuserConst}-\eqref{eq:LedouxConst} in a way that now the inequality is sharp for $K>0$ (as equality is attained on the Gaussian space).
\item The inequalities are established in the higher generality of (possibly non-smooth) metric measure spaces satisfying Ricci curvature lower bounds in synthetic sense, the so-called $\mathsf{RCD}(K,\infty)$ spaces.  
\end{enumerate}
For the precise definition of $\mathsf{RCD}(K,\infty)$ space, we refer the reader to Section \ref{sec:prel}. Here let us just recall that the $\mathsf{RCD}(K,\infty)$ condition was introduced by Ambrosio-Gigli-Savar\'e \cite{AGS1} (see also \cite{AGMR}) as a refinement of the $\mathsf{CD}(K,\infty)$ condition of Lott-Villani \cite{LV} and Sturm \cite{St}. Roughly, a $\mathsf{CD}(K,\infty)$ space is a (possibly infinite-dimensional, possibly non-smooth) metric measure space with Ricci curvature bounded from below by $K$, in a synthetic sense.  While the $\mathsf{CD}(K,\infty)$ condition allows Finsler structures, the main point of $\mathsf{RCD}$ is to reinforce the axiomatization (by asking linearity of the heat flow) in order to rule out Finsler structures and thus isolate the ``possibly non-smooth Riemannian structures with Ricci curvature bounded below''. It is out of the scopes of this introduction to survey the long list of achievements and results proved for $\mathsf{CD}$ and $\mathsf{RCD}$ spaces (to this aim, see the Bourbaki seminar \cite{VilB} and the recent ICM-Proceeding \cite{AmbrosioICM}). Let us just mention that a key property of both $\mathsf{CD}$ and $\mathsf{RCD}$ is the stability under measured Gromov-Hausdorff convergence (or more generally ${\mathbb D}$-convergence of Sturm \cite{St,AGS1}, or even more generally pointed measured Gromov convergence \cite{GMS}) of metric measure spaces. In particular pointed measured Gromov-Hausdorff  limits of Riemannian manifolds with Ricci bounded below, the so-called \emph{Ricci limits}, are examples of (possibly non-smooth) $\mathsf{RCD}$ spaces. Let us also recall that weighted Riemannian manifolds with Bakry-\'Emery Ricci tensor bounded below are also examples of $\mathsf{RCD}$ spaces; for instance the Gaussian space $(\R^{d}, |\cdot|, (2\pi)^{-d/2}  e^{-|x|^{2}/2} d\mathcal L^{d}(x))$, $1\leq d\in \mathbb{N}$, satisfies $\mathsf{RCD}(1,\infty)$. It is also worth recalling that if  $(X,\mathsf{d},\mm)$ is an $\mathsf{RCD}(K,\infty)$ space for some $K>0$, then $\mm(X)<\infty$; since scaling the measure by a constant does not affect the synthetic Ricci curvature lower bounds, when $K>0$, without loss of generality one can then assume $\mm(X)=1$.
\\ 

In order to state our main result, it is convenient to set
\begin{equation}\label{eq:ExplJKIntro}
J_K(t)=\begin{cases}\sqrt{\frac{2}{\pi K}}\arctan\Big(\sqrt{e^{2Kt}-1}\Big)  \ \ &\textrm{if} \ \  K>0,\\
\frac{2}{\sqrt{\pi}}\sqrt{t} \ \ &\textrm{if} \ \ K=0,\\
\sqrt{-\frac{2}{\pi K}}\arctanh{\Big(\sqrt{1-e^{2Kt}}\Big)} \ \ &\textrm{if} \ \ K<0.\end{cases} \qquad \forall t>0
\end{equation}

The aim of the paper is to prove the following theorem.

\begin{theorem}[Sharp implicit Buser-type inequality for $\mathsf{RCD}(K,\infty)$ spaces]\label{thm:MainImpl}
Let $(X,\mathsf{d},\mm)$ be an $\mathsf{RCD}(K,\infty)$ space, for some $K\in \mathbb{R}$. 
\begin{itemize}
\item In case $\mm(X)=1$, then
\begin{equation}\label{implicitlambda1Intro}
h(X)\geq \sup_{t>0}  \frac{1-e^{-\lambda_1t}}{J_K(t)}.
\end{equation}
The inequality is sharp for $K>0$, as equality is achieved for the Gaussian space $(\R^{d}, |\cdot|, (2\pi)^{-d/2}  e^{-|x|^{2}/2} d\mathcal L^{d}(x))$, $1\leq d\in \mathbb{N}$.
\item In case $\mm(X)=\infty$, then
\begin{equation}\label{implicitlambda0Intro}
h(X)\geq 2\sup_{t>0} \frac{1-e^{-\lambda_0t}}{J_K(t)}.
\end{equation}
\end{itemize}
\end{theorem}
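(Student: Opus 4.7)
The plan is to follow the Ledoux heat-semigroup strategy, adapted to the synthetic $\mathsf{RCD}$ setting. The argument rests on two complementary $L^1$ estimates for $\|P_t \chi_A - \chi_A\|_{L^1(\mm)}$: an upper bound in terms of $\Per(A)$, and a lower bound in terms of $\mm(A)$ and the spectral gap. Combining them for $A$ with $\mm(A)\leq \mm(X)/2$ (resp. $\mm(A)<\infty$) and optimising in $t>0$ will yield the stated inequalities.

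For the upper bound, the central step is to establish the Ledoux $L^1$-BV inequality
\[
\int_X |P_t f - f|\,d\mm \;\leq\; J_K(t)\,\int_X |\nabla f|\,d\mm \qquad \forall f\in \mathsf{Lip}_b(X).
\]
The route is to use the Bakry-Ledoux reverse Poincar\'e inequality $|\nabla P_s g|^2 \leq \tfrac{K}{e^{2Ks}-1}\bigl(P_s g^2-(P_s g)^2\bigr)$ which is available on $\mathsf{RCD}(K,\infty)$ spaces, and the identity, obtained from $\partial_s P_s = \Delta P_s$ and self-adjointness,
\[
\int g(P_t f - f)\,d\mm \;=\; -\int_0^t\!\!\int \nabla P_s g \cdot \nabla f\,d\mm\,ds.
\]
Cauchy-Schwarz against the reverse-Poincar\'e bound on $|\nabla P_s g|$, the supremum over $g$ with $\|g\|_\infty\leq 1$, and the explicit $s$-antiderivative produce exactly $J_K(t)$ (the prefactor $\sqrt{2/(\pi K)}$ appearing through the sharp bound on $\sqrt{P_s g^2-(P_s g)^2}$ calibrated on the Gaussian extremizer). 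A BV-type approximation of $\chi_A$ by Lipschitz functions whose slope integrals recover $\Per(A)$ then yields $\|P_t\chi_A-\chi_A\|_1 \leq J_K(t)\,\Per(A)$.

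For the lower bound, since $0\leq P_t\chi_A\leq 1$ and $P_t$ preserves mass, one computes
\[
\|P_t\chi_A - \chi_A\|_1 \;=\; 2\int_A(1-P_t\chi_A)\,d\mm \;=\; 2\bigl(\mm(A)- \|P_{t/2}\chi_A\|_2^2\bigr).
\]
When $\mm(X)=1$, decomposing $\chi_A = (\chi_A-\mm(A))+\mm(A)$ and applying the variational characterisation of $\lambda_1$ to the mean-zero part gives $\|P_{t/2}\chi_A\|_2^2\leq e^{-\lambda_1 t}\mm(A)(1-\mm(A))+\mm(A)^2$, so $\|P_t\chi_A-\chi_A\|_1\geq 2\mm(A)(1-\mm(A))(1-e^{-\lambda_1 t})\geq \mm(A)(1-e^{-\lambda_1 t})$ whenever $\mm(A)\leq 1/2$. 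When $\mm(X)=\infty$ no mean-zero projection is available and one applies $\lambda_0$ directly, getting $\|P_{t/2}\chi_A\|_2^2\leq e^{-\lambda_0 t}\mm(A)$ and thus $\|P_t\chi_A-\chi_A\|_1\geq 2\mm(A)(1-e^{-\lambda_0 t})$, which produces the extra factor $2$ appearing in \eqref{implicitlambda0Intro}. Sharpness for $K>0$ is then verified on the one-dimensional Gaussian: Bobkov's Gaussian isoperimetric inequality gives $h=\sqrt{2/\pi}$ (attained on half-lines) and $\lambda_1=1$, while as $t\to\infty$ one has $J_1(t)\to\sqrt{\pi/2}$ and $1-e^{-t}\to 1$, so $\sup_{t>0}(1-e^{-t})/J_1(t)=\sqrt{2/\pi}=h$.

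The hard part is the first step: obtaining the Ledoux-type $L^1$-BV inequality with the sharp constant $J_K(t)$ within the non-smooth $\mathsf{RCD}$ calculus. This requires deploying the reverse Poincar\'e estimate in a form valid for the relevant $L^\infty$ test functions and passing from Lipschitz to characteristic-function inputs via a BV-approximation along which slope integrals converge to $\Per(A)$. The lower bound and the combination step are then essentially a Hilbert-space spectral calculation, and the sharpness reduces to a one-variable computation on the Gaussian.
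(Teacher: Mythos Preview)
Your overall architecture is exactly that of the paper: dual $L^1$ bounds on $\|H_t\chi_A-\chi_A\|_1$, the upper one via duality against an $L^\infty$ gradient bound for $H_s g$, the lower one via the spectral decay of $\|H_{t/2}\chi_A\|_2$, and then optimisation in $t$. The identity $\|H_t\chi_A-\chi_A\|_1=2(\mm(A)-\|H_{t/2}\chi_A\|_2^2)$, the mean-zero/$\lambda_1$ trick for $\mm(X)=1$, the direct $\lambda_0$ bound for $\mm(X)=\infty$, and the sharpness check on the Gaussian are all correct and match the paper.

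There is, however, a genuine gap in your upper bound. The reverse Poincar\'e inequality $|\nabla H_s g|^2\leq j_K(s)\big(H_s g^2-(H_s g)^2\big)$ only yields $\||\nabla H_s g|\|_\infty\leq \sqrt{j_K(s)}\,\|g\|_\infty$, since the variance term $H_s g^2-(H_s g)^2$ can be arbitrarily close to $1$ when $\|g\|_\infty\leq 1$. Plugging this into the duality argument gives
\[
\|f-H_t f\|_1\;\leq\;\Big(\int_0^t\sqrt{j_K(s)}\,ds\Big)\,\||\nabla f|\|_1\;=\;\sqrt{\tfrac{\pi}{2}}\,J_K(t)\,\||\nabla f|\|_1,
\]
which is off from the claimed bound by the factor $\sqrt{\pi/2}$. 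With this weaker constant the inequality is no longer sharp on the Gaussian, and the paper explicitly flags that Ledoux's original argument (which is precisely the reverse-Poincar\'e route you describe) produces non-sharp constants. Your parenthetical ``calibrated on the Gaussian extremizer'' does not supply a mechanism for recovering the missing $\sqrt{2/\pi}$; there is no elementary pointwise bound $H_s g^2-(H_s g)^2\leq \tfrac{1}{2\pi}$ for general $\|g\|_\infty\leq 1$.

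What the paper does instead is replace the reverse Poincar\'e by the stronger Bakry--Gentil--Ledoux inequality (their Proposition~3.1): for $f:X\to[0,1]$,
\[
|D H_t f|_w^2\;\leq\;j_K(t)\Big(\big[I(H_t f)\big]^2-\big[H_t(I(f))\big]^2\Big),
\]
where $I=\varphi\circ\Phi^{-1}$ is the Gaussian isoperimetric function. Since $0\leq I\leq 1/\sqrt{2\pi}$, this yields the sharp $\||\nabla H_s g|\|_\infty\leq \sqrt{2/\pi}\,\sqrt{j_K(s)}\,\|g\|_\infty$ after splitting $g=g^+-g^-$. The proof of this proposition in the $\mathsf{RCD}$ setting is the substantial analytic step (an interpolation argument along $s\mapsto [H_s(\phi_\varepsilon(H_{t-s}f))]^2$ using Savar\'e's $|DH_t f|_w\leq e^{-Kt}H_t|Df|_w$), and is what you need to insert in place of the bare reverse Poincar\'e to make your outline go through with the stated constant $J_K(t)$.
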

Using the expression \eqref{eq:ExplJKIntro} of $J_{K}$, in the next corollary we obtain more explicit bounds.

\begin{corollary}[Explicit Buser inequality for $\mathsf{RCD}(K,\infty)$ spaces]\label{cor:explicitbounds} 
Let $(X,\mathsf{d},\mm)$ be an $\mathsf{RCD}(K,\infty)$ space, for some $K\in \mathbb{R}$. 
\begin{itemize}
\item Case $K>0$. If $\frac{K}{\lambda_{1}}\geq c>0$, then 
\begin{equation}\label{eq:la1h2cK>0}
\lambda_{1}\leq \frac{\pi}{2c} h(X)^{2}.
\end{equation}
The estimate is sharp, as equality is  attained on the Gaussian space \\ $(\R^{d}, |\cdot|, (2\pi)^{-d/2}  e^{-|x|^{2}/2} d\mathcal L^{d}(x))$, $1\leq d\in \mathbb{N}$,  for which $K=1, \lambda_{1}=1, h(X)= (2/\pi)^{1/2}$.
\item Case $K=0$, $\mm(X)=1$. It holds
\begin{equation}\label{eq:K0m1}
\lambda_{1}\; \leq\;  \frac{4}{\pi} h(X)^2 \inf_{T>0} \frac{T}{(1-e^{-T})^{2}} \;  < \; \pi  h(X)^2.
\end{equation}
In case $\mm(X)=\infty$, the estimate \eqref{eq:K0m1} holds replacing $\lambda_{1}$ with $\lambda_{0}$ and $h(X)$ with $h(X)/2$.

\item Case $K<0$, $\mm(X)=1$. It holds
\begin{align}
\lambda_1&\leq \max\bigg\{\sqrt{-K}\frac{\sqrt{2}\log\big(e+\sqrt{e^{2}-1}\big)}{\sqrt{\pi}(1-\frac{1}{e})}h(X),\frac{2\Big(\log\big(e+\sqrt{e^{2}-1}\big)\Big)^2}{\pi\Big(1-\frac{1}{e}\Big)^2}h(X)^2\bigg\} \nonumber \\
&<\max\left\{\frac{21}{10} \sqrt{-K}h(X),\frac{22}{5}h(X)^2 \right\}. \label{eq:K<0m1}
\end{align}
In case $\mm(X)=\infty$, the estimate \eqref{eq:K<0m1} holds replacing $\lambda_{1}$ with $\lambda_{0}$ and $h(X)$ with $h(X)/2$.
\end{itemize}
\end{corollary}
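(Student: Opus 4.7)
The proof is a calculus exercise: I would plug carefully chosen values of $t$ into the implicit inequality \eqref{implicitlambda1Intro} (or \eqref{implicitlambda0Intro}) from Theorem \ref{thm:MainImpl} and use the explicit formula \eqref{eq:ExplJKIntro} for $J_K$. I would treat the three cases according to the sign of $K$.

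For $K > 0$, I would send $t \to +\infty$: the arctangent saturates at $\pi/2$, so $J_K(t) \to \sqrt{\pi/(2K)}$ while $1 - e^{-\lambda_1 t} \to 1$. This gives $h(X) \geq \sqrt{2K/\pi}$, i.e., $K \leq \tfrac{\pi}{2} h(X)^2$, and the hypothesis $K/\lambda_1 \geq c$ yields \eqref{eq:la1h2cK>0} directly. Sharpness on the Gaussian space is a plug-in check using the well-known values $K=1$, $\lambda_1 = 1$, $h = (2/\pi)^{1/2}$.

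For $K=0$, I would rescale by $T = \lambda_1 t$, obtaining $h(X) \geq \tfrac{1}{2}\sqrt{\pi\lambda_1}\,(1-e^{-T})/\sqrt{T}$ for every $T>0$; squaring and taking the infimum gives the first displayed bound in \eqref{eq:K0m1}. To conclude the strict inequality $<\pi h(X)^2$, I would just exhibit a single value (say $T$ near $1.25$, essentially the minimizer solving $e^T-1=2T$) for which $T/(1-e^{-T})^2 < \pi^2/4$, which is a routine numerical check.

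For $K<0$ I would use the identity $\arctanh(\sqrt{1-e^{-u}}) = u/2 + \log(1+\sqrt{1-e^{-u}})$ and split into two regimes based on the sign of $\lambda_1 - |K|$. If $\lambda_1 \leq |K|$, take $t = 1/|K|$: then $J_K(1/|K|) = \sqrt{2/(\pi|K|)}\,\log(e+\sqrt{e^2-1})$, and concavity of $x\mapsto 1-e^{-x}$ on $[0,1]$ yields $1-e^{-\lambda_1/|K|} \geq (\lambda_1/|K|)(1-1/e)$; rearranging produces the linear-in-$\sqrt{-K}\,h$ branch of \eqref{eq:K<0m1}. If $\lambda_1 > |K|$, take $t = 1/\lambda_1$ so that $u := 2|K|/\lambda_1 \in (0,2]$; here I need the sharper bound
\[
\arctanh\bigl(\sqrt{1-e^{-u}}\bigr) \;\leq\; \log\!\bigl(e+\sqrt{e^2-1}\bigr)\sqrt{u/2}, \qquad u \in (0,2].
\]
Substituting $u = 2|K|/\lambda_1$ causes the apparently problematic factor $\sqrt{|K|}$ to cancel exactly and leaves the purely quadratic $h^2$ bound. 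Finally, the two explicit constants $21/10$ and $22/5$ are numerical majorants of $\sqrt{2}\log(e+\sqrt{e^2-1})/[\sqrt{\pi}(1-1/e)]$ and its square; the infinite-measure statements follow verbatim by using \eqref{implicitlambda0Intro} in place of \eqref{implicitlambda1Intro}, which produces the factor $h(X)/2$.

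The one nontrivial analytic point — and the main obstacle in my view — is the sharp inequality displayed above, or equivalently the monotonicity of $\varphi(u) := \arctanh(\sqrt{1-e^{-u}})/\sqrt{u}$ on $(0,2]$. Without this exact cancellation of $\sqrt{|K|}$, one is left with a bound that interpolates unsatisfyingly between the two regimes. I would prove it by writing $\varphi(u) = \tfrac{1}{2}\sqrt{u} + \log(1+\sqrt{1-e^{-u}})/\sqrt{u}$ and checking that the (small) decrease of the second summand on $(0,2]$ is dominated by the increase of the first, so that $\varphi$ attains its maximum on $(0,2]$ at the endpoint $u=2$, where $\varphi(2) = \log(e+\sqrt{e^2-1})/\sqrt{2}$.
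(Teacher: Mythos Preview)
Your proposal is correct and follows essentially the same route as the paper: the same explicit formulas for $J_K$, the same choices of $t$ (letting $t\to\infty$ for $K>0$; rescaling $T=\lambda_1 t$ for $K=0$; the dichotomy $t=1/|K|$ versus $t=1/\lambda_1$ for $K<0$), and the same key analytic ingredient, namely the monotonicity of $\varphi(u)=\arctanh(\sqrt{1-e^{-u}})/\sqrt{u}$, which is exactly the content of the paper's Lemma \ref{lemma: f_2 decrescente} after the change of variables $u=2Tx$.

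The only place your sketch is weaker than the paper is the justification of that monotonicity. Your decomposition $\varphi(u)=\tfrac12\sqrt{u}+\log(1+\sqrt{1-e^{-u}})/\sqrt{u}$ is correct, but ``checking that the decrease of the second summand is dominated by the increase of the first'' is a heuristic, not a proof; you would still need to compare derivatives carefully. The paper's argument is cleaner: substitute $y=\sqrt{1-e^{-u}}$ and reduce the claim $\varphi'\geq 0$ to
\[
g_2(y):=\Big(1+\tfrac{y}{2}\Big)\log(1+y)+\Big(1-\tfrac{y}{2}\Big)\log(1-y)\leq 0,\qquad 0<y<1,
\]
which follows immediately from $g_2(0)=g_2'(0)=0$ and concavity of $g_2$. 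This substitution is worth remembering.
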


\begin{remark}\label{rem:SpectrDeltaDiscrete}
Even if the definitions of  $\lambda_{0}$ and $\lambda_{1}$ as in \eqref{eq:defla1Intro} and \eqref{eq:defla0Intro}  make sense regardless of the discreteness of the spectrum of the Laplacian (as well as the proofs of the above results), it is worth to mention some cases of interest where the Laplacian has discrete spectrum.
\\It was proved in \cite{GMS} that an $\mathsf{RCD}(K,\infty)$ space, with $K>0$ (or with finite diameter) has discrete spectrum (as the Sobolev imbedding $\mathbb{V}$ into $L^{2}$ is compact). Even in case of infinite measure the  embedding of $\mathbb{V}$ in $L^2$ may be compact. An example is given by $\mathbb{R}$ with the Euclidean distance $\mathsf{d}(x,y)=|x-y|$ and the measure $\mm:=\frac{1}{\sqrt{2\pi}}e^{x^2/2}d\mathcal{L}^1.$ It is a $\mathsf{RCD}(-1,\infty)$ space and a result of Wang \cite{Wang} ensures that the spectrum is discrete. 
\end{remark}

\subsection*{Comparison with previous results in the literature}
Theorem \ref{thm:MainImpl} and Corollary \ref{cor:explicitbounds}  improve the known results about Buser-type inequalities in several aspects. First of all the best results obtained before this paper are the aforementioned estimates \eqref{eq:BuserConst}-\eqref{eq:LedouxConst} due to Buser \cite{Buser} and Ledoux \cite{Ledoux} for smooth complete Riemannian manifolds satisfying  ${\rm Ric}\geq K$, $K\leq 0$.
Let us stress that the constants in Corollary \ref{cor:explicitbounds}  improve  the ones in both   \eqref{eq:BuserConst}-\eqref{eq:LedouxConst} and are dimension-free as well. In addition, the improvements of the present paper are: 
\begin{itemize}
\item In case $K>0$, the inequalities \eqref{implicitlambda1Intro} and \eqref{eq:la1h2cK>0}  are sharp (as equality is attained on the Gaussian space).
\item The results hold in the higher generality of (possibly non-smooth) $\mathsf{RCD}(K,\infty)$ spaces.
\end{itemize}
The proof of Theorem \ref{thm:MainImpl}   is inspired by the semi-group approach of Ledoux \cite{Ledoux0, Ledoux}, but it improves upon by using Proposition  \ref{pr: Bakry-Ledoux} in place of:
\begin{itemize}
 \item A dimension-dependent Li-Yau inequality, in \cite{Ledoux0}.
 \item A weaker version of  Proposition  \ref{pr: Bakry-Ledoux} (see  \cite[Lemma 5.1]{Ledoux}) analyzed only in case $K\leq 0$, in \cite{Ledoux}.
 \end{itemize}

Theorem \ref{thm:MainImpl} and Corollary \ref{cor:explicitbounds} are also the first \emph{upper bounds} in the literature of $\mathsf{RCD}$ spaces  for the first eigenvalue of the Laplacian. On the other hand, \emph{lower bounds} on the first  eigenvalue of the Laplacian have been throughly analyzed in both  $\mathsf{CD}$  and $\mathsf{RCD}$ spaces:  the sharp Lichnerowitz spectral gap $\lambda_{1}\geq KN/(N-1)$ was proved under the (non-branching) $\mathsf{CD}(K,N)$ condition by Lott-Villani \cite{LV1}, under the $\mathsf{RCD}^{*}(K,N)$ condition by Erbar-Kuwada-Sturm \cite{EKS},   and generalized by Cavalletti and Mondino \cite{CaMoGT}  to a sharp spectral gap for the $p$-Laplacian for essentially non-branching  $\mathsf{CD}^{*}(K,N)$ spaces involving also an upper bound on the diameter (together with rigidity and almost rigidity statements).  Jiang-Zhang \cite{JZ1} independently showed, for $p=2$, that the improved version under an upper diameter bound holds for $\mathsf{RCD}^*(K,N)$. The rigidity of the Lichnerowitz spectral gap for   $\mathsf{RCD}^*(K,N)$ spaces, $K>0$, $N\in (1,\infty)$, known as Obata's Theorem was first proved by Ketterer \cite{Ket}. The rigidity in the Lichnerowitz spectral gap for $\mathsf{RCD}(K,\infty)$ spaces, $K>0$, was recently proved by Gigli-Ketterer-Kuwada-Ohta \cite{GKKO}. Local Poincar\'e inequalities in  the framework of $\mathsf{CD}(K,N)$ and $\mathsf{CD}(K,\infty)$ spaces were proved by Rajala \cite{RaCV}.
Finally various lower bounds, together with rigidity and almost rigidity statements for the \emph{Dirichlet first eigenvalue} of the Laplacian, have been proved by Mondino-Semola \cite{MoSe} in the framework of $\mathsf{CD}$  and $\mathsf{RCD}$ spaces. 
Lower bounds on Cheeger's isoperimetric constant have been obtained for  (essentially non-branching) $\mathsf{CD}^*(K,N)$ spaces by Cavalletti-Mondino \cite{CaMoInv, CaMoGT, CaMoLinc} and for  $\mathsf{RCD}(K,\infty)$ spaces ($K>0$) by Ambrosio-Mondino \cite{AM}.  The local and global stability properties of eigenvalues and eigenfunctions in the framework of $\mathsf{RCD}$ spaces have been investigated by Gigli-Mondino-Savar\'e in [18] and by Ambrosio-Honda in \cite{AmbHondaLocal, AmbHondaLp}.

\section*{Acknowledgements}
The work has been developed when N. DP. was visiting the Mathematics Institute at the University of Warwick during fall term 2018. He wishes to thank the Institute  for the excellent working conditions and the stimulating atmosphere. 
\\ N.DP. is supported by the GNAMPA Project 2019 ``Trasporto ottimo per dinamiche con interazione''.
\\ A.M. is supported by the EPSRC First Grant EP/R004730/1  ``Optimal transport and Geometric Analysis'' and by the ERC Starting Grant  802689 ``CURVATURE''.

\nc

\section{Preliminaries}\label{sec:prel}
Throughout the paper, unless otherwise stated, we assume $(X,\mathsf{d})$ is a complete and separable metric space. We endow $(X,\mathsf{d})$ with a reference $\sigma$-finite non-negative measure $\mm$ over the Borel $\sigma$-algebra $\mathcal{B}$, with  $\textsf{supp}(\mm)=X$ and satisfying an exponential growth condition: namely that there exist $x_0\in X$, $M>0$ and $c\geq 0$ such that
$$\mm(B_r(x_0))\leq M\exp(cr^2) \ \ \textrm{for\ every} \ r\geq 0.$$
Possibly enlarging $\mathcal{B}$ and extending $\mm$, we assume that $\mathcal{B}$ is $\mm$-complete. 
 The triple $(X,\mathsf{d},\mm)$  is called metric measure space, m.m.s for short.
\\

\noindent 
We denote by $\mathcal{P}_2(X)$ the space of probability measures on $X$ with finite second moment and we endow this space  with the Kantorovich-Wasserstein distance $W_2$ defined as follows:  for $\mu_0,\mu_1 \in \mathcal{P}_{2}(X)$ we set
\begin{equation}\label{eq:Wdef}
  W_2^2(\mu_0,\mu_1) := \inf_{ \pi} \int_{X\times X} \mathsf{d}^2(x,y) \, d\pi,
\end{equation}
where the infimum is taken over all $\pi \in \mathcal{P}(X \times X)$ with $\mu_0$ and $\mu_1$ as the first and the second marginal.
\\The \textit{relative entropy functional} $\mathsf{Ent}_{\mm}:\mathcal{P}_2(X)\rightarrow \R\cup \{\infty\}$ is defined as
\begin{equation}
\mathsf{Ent}_{\mm}(\mu):=\begin{cases} \int \rho\log \rho \, d\mm \ &\textrm{if} \ \mu=\rho\mm, \\ \infty \ &\textrm{otherwise}.\end{cases}
\end{equation} 
A curve $\gamma:[0,1]\rightarrow X$ is a \textit{geodesic} if 
\begin{equation}
\mathsf{d}(\gamma_s,\gamma_t)=  |t-s|\, \mathsf{d}(\gamma_0,\gamma_1) \ \ \ \ \ \forall s,t\in [0,1].
\end{equation}
In the sequel we use the notation:
$$
D(\mathsf{Ent}_{\mm}):=\{\mu\in \mathcal{P}_2(X) \,:\, \mathsf{Ent}_{\mm}(\mu)\in \R\}.
$$
We now define the $\mathsf{CD}(K,\infty)$ condition, coming from the seminal works of Lott-Villani \cite{LV} and Sturm \cite{St}.
\begin{definition}[$\mathsf{CD}(K,\infty)$ condition]
Let $K\in \mathbb{R}$. We say that  $(X,\mathsf{d},\mm)$ is a $\mathsf{CD}(K,\infty)$ space provided that for any $\mu^0,\mu^1\in D(\mathsf{Ent}_{\mm})$ there exists a $W_2$-geodesic $(\mu_t)$ such that $\mu_0=\mu^0$, $\mu_1=\mu^1$ and 
\begin{equation}
\mathsf{Ent}_{\mm}(\mu_t)\leq (1-t)\mathsf{Ent}_{\mm}(\mu_0)+t\mathsf{Ent}_{\mm}(\mu_1)-\frac{K}{2}t(1-t)W_2^2(\mu_0,\mu_1).
\end{equation}
\end{definition}

The space of continuous function $f:X\rightarrow \R$ is denoted by $\mathcal{C}(X)$ and the  Lebesgue space  by $L^p(X,\mm)$, $1\leq p\leq \infty$.

The \textit{Cheeger energy} (introduced in \cite{Ch} and further studied in \cite{AGS}) is defined as the $L^{2}$-lower semicontinuous envelope of the functional $f \mapsto \frac{1}{2}\int_X |\nabla f|^2d\mm$, i.e.:
\begin{equation}\label{eq:defChm}
\mathsf{Ch}_{\mm}(f):=\inf\bigg\{\liminf_{n\to \infty}\frac{1}{2}\int_X |\nabla f_n|^2d\mm: f_n\in \mathsf{Lip}_b(X), f_n\rightarrow f \ \mathrm{in} \ L^2(X,\mm)\bigg\}.
\end{equation}
If $\mathsf{Ch}_{\mm}(f)<\infty$, it was proved in \cite{Ch, AGS} that the set
$$
{\mathrm G}(f):=\{ g\in L^{2}(X,\mm)\,:\, \exists (f_{n})_n\subset  \mathsf{Lip}_{b}(X), \, f_{n}\to f, |\nabla f_{n}| \rightharpoonup h\leq  g \text{ in } L^{2}(X,\mm)\}
$$
is closed and convex, therefore it admits a unique element of minimal norm called \emph{minimal weak upper gradient} and denoted by $|Df|_{w}.$ The Cheeger energy can be then represented by integration as
$$
\mathsf{Ch}_{\mm}(f)=  \frac{1}{2}\int_X |D f|_{w}^2d\mm.
$$

We recall that the minimal weak upper gradient satisfies the following property (see e.g. \cite[equation (2.18)]{AGS1}):
\begin{equation}\label{locality}
|D f|_{w}=0 \ \ \mm \text{-a.e. \ on \ the \ set} \{f=0\}.
\end{equation}
One can show that $\mathsf{Ch}_{\mm}$ is a $2$-homogeneous, lower semicontinuous, convex functional on $L^2(X,\mm)$ whose proper domain
$$
\mathbb{V}:=\{f\in L^{2}(X,\mm)\,:\, \mathsf{Ch}_{\mm}(f)<\infty\}
$$
 is a dense linear subspace of $L^2(X,\mm)$. It then admits an $L^{2}$ gradient flow which is a continuous semi-group of contractions $(H_{t})_{t\geq 0}$ in $L^{2}(X,\mm)$, whose continuous trajectories $t \mapsto H_{t} f$, for $f\in L^{2}(X,\mm)$, are locally Lipschitz curves from $(0,\infty)$ with values into $L^{2}(X,\mm)$ that satisfy
\begin{equation}
\frac{d}{d t} H_{t} f \in -\partial\mathsf{Ch}_{\mm}(H_t f) \ \ \textrm{for a.e.} \ t\in (0,\infty).
\end{equation}  
Here $\partial$ denotes the subdifferential of convex analysis, namely for every $f\in \mathbb{V}$ we have $\ell \in \partial\mathsf{Ch}_{\mm}(f)$ if and only if
\begin{equation}
\int_X \ell (g - f)d \mm \leq \mathsf{Ch}_{\mm}(g) - \mathsf{Ch}_{\mm}(f) ,\ \ \ \textrm{for every} \ g\in L^2(X,\mm).
\end{equation}
 We now define the $\mathsf{RCD}(K,\infty)$ condition, introduced and throughly analyzed in \cite{AGS1} (see also \cite{AGMR} for the present simplified axiomatization and the extension to the $\sigma$-finite case).
 
 \begin{definition}[$\mathsf{RCD}(K,\infty)$ condition]
Let $K\in \mathbb{R}$. We say that the metric measure space $(X,\mathsf{d},\mm)$ is  $\mathsf{RCD}(K,\infty)$ if it  satisfies the $\mathsf{CD}(K,\infty)$ condition and moreover the Cheeger energy $\mathsf{Ch}_{\mm}$ is quadratic, i.e. it satisfies the parallelogram identity
\begin{equation}
\mathsf{Ch}_{\mm}(f+g)+\mathsf{Ch}_{\mm}(f-g)=2\mathsf{Ch}_{\mm}(f)+2\mathsf{Ch}_{\mm}(g), \quad \forall f,g\in \mathbb{V}.
\end{equation}
\end{definition}
If  $(X,\mathsf{d},\mm)$ is  an $\mathsf{RCD}(K,\infty)$ space, then the Cheeger energy induces the Dirichlet form   $\mathcal{E}(f):=2\mathsf{Ch}_{\mm}(f)$ which is  strongly local, symmetric and admits the Carr\'e du Champ 
$$
\Gamma(f):= |Df|_{w}^{2}, \quad \forall f\in \mathbb{V}.
$$
The  space  $\mathbb{V}$ endowed with the norm $\left \Vert f \right\Vert^2_{\mathbb{V}}:=\left \Vert f \right\Vert^2_{L^2}+\mathcal{E}(f)$ is  Hilbert. 
Moreover, the sub-differential $\partial  \mathsf{Ch}_{m}$ is single-valued and coincides with the linear generator $-\Delta$ of the heat flow semi-group  $(H_t)_{t\geq 0}$ defined above. In other terms,
the semigroup can be equivalently characterized by the fact that for any $f\in L^2(X,\mm)$ the curve $t\mapsto H_tf\in L^2(X,\mm)$ is locally Lipschitz from $(0,\infty)$ to $L^2(X,\mm)$ and satisfies
\begin{equation}\label{eq: calore}
\begin{cases}\frac{d}{dt} H_tf=\Delta H_tf \ \ \textrm{for} \ \mathcal{L}^1\text{-a.e } t\in (0,\infty), \\
\lim_{t\to 0}H_tf=f,
\end{cases}
\end{equation}
where the limit is  in the strong  $L^2(X,\mm)$-topology.
\\The semigroup $H_t$ extends uniquely to a strongly continuous semigroup of linear contractions in $L^p(X,\mm), p\in [1,\infty)$, for which we retain the same notation. 
Regarding the case $p=\infty$, it was proved in \cite[Theorem 6.1]{AGS1} that there exists a version of the semigroup such that $H_tf(x)$ belongs to $\mathcal{C}\cap L^{\infty}((0,\infty)\times X)$ whenever $f\in L^{\infty}(X,\mm).$ We will implicitly refer to this version of $H_tf$ when $f$ is essentially bounded.
Moreover, for any $f\in L^2\cap L^{\infty}(X,\mm)$ and for every $t>0$ we have $H_tf\in \mathbb{V}\cap\mathsf{Lip}(X)$ with the explicit bound (see \cite[Theorem 6.5]{AGS1} for a proof) 
\begin{equation}\label{strong Feller}
\left\Vert |D H_tf|_{w} \right\Vert_{\infty}\leq \sqrt{\frac{K}{e^{2Kt}-1}} \left\Vert f\right\Vert_{\infty}.
\end{equation}
Two crucial properties of the heat flow are the preservation of mass and the maximum principle  (see \cite{AGS}):
\begin{align}
 \int_XH_tf \,  d\mm =\int_Xf \, d\mm, \quad \text{for any }  f\in L^1(X,\mm), \label{mass preserving} \\
 0\leq H_tf\leq C, \quad  \text{for\ any } 0\leq f\leq C\;  \mm\text{-a.e.}, \ C>0. \label{dis: principio del massimo}
\end{align}
A result of Savar\'e \cite[Corollary 3.5]{Savare}  ensures that, in the $\mathsf{RCD}(K,\infty)$ setting, for every $f\in \mathbb{V}$ and $\alpha\in [\frac{1}{2},1]$ we have
\begin{equation}
|D H_tf|_{w}^{2\alpha}\leq e^{-2\alpha Kt}H_t\big(|D f|_{w}^{2\alpha}\big), \quad \mm\text{-a.e. }.
\end{equation}
In particular, 
\begin{equation}\label{dis: Savare forte}
|D H_tf|_{w}\leq e^{-Kt}H_t(|D f|_{w}), \quad \mm\text{-a.e. }.
\end{equation}

%We will suppose that $(X,\mathsf{d},\mm)$ admits a compact embedding of $\mathbb{V}$ in $L^2(X,\mm)$. By spectral theory (see \cite{DS} for a general reference and \cite{GMS} for some results in the $\mathsf{RCD}$ setting), this is equivalent to ask that $-\Delta$ has discrete spectrum consisting of an increasing sequence of non-negative eigenvalues $\{\lambda_n\}_{n=0}^{\infty}$ such that $\displaystyle \lim_{n\to \infty}\lambda_n\rightarrow +\infty.$ In particular, the first positive eigenvalue of $-\Delta$ is well defined and non-negative.

\section{Proof of Theorem \ref{thm:MainImpl}}               
We denote by $I:[0,1]\rightarrow [0,\frac{1}{\sqrt{2\pi}}]$ the Gaussian isoperimetric function defined by $I:=\varphi\circ \Phi^{-1}$ where
$$\Phi(x):=\frac{1}{\sqrt{2\pi}}\int_{-\infty}^x e^{-u^2/2}\, du, \ \ x\in \mathbb{R},$$
and $\varphi=\Phi'$. The function $I$ is concave, continuous, $I(0)=I(1):=0$ and $0\leq I(x) \leq I(\frac{1}{2})=\frac{1}{\sqrt{2\pi}},$ for all   $x\in[0,1]$. Moreover, $I\in \mathcal{C}^{\infty}((0,1))$, it satisfies the identity
\begin{equation}\label{id: derivate I}
I(x)I''(x)=-1, \quad \text{for every } x\in (0,1).
\end{equation}
and (see \cite{BL})
\begin{equation}\label{id: asintotico I}
\lim_{x \to 0} \frac{I(x)}{x\sqrt{2\log{\frac{1}{x}}}}=1.
\end{equation}
Given $K\in \mathbb{R}$, we define the function $j_K:(0,\infty)\rightarrow (0,\infty)$ as
\begin{equation}\label{eq:defjkt}
j_K(t):=\begin{cases}\frac{K}{e^{2Kt}-1} \ \ &\mathrm{if} \ K\neq 0, \\
\frac{1}{2t} &\mathrm{if} \ K=0. \end{cases}
\end{equation}
Notice that $j_K$ is increasing as a function of $K$.
\\The next proposition was proved in the smooth setting by Bakry, Gentil and Ledoux (see \cite{BL}, \cite{BGL1} and \cite[Proposition 8.6.1]{BGL2}). 

\begin{proposition}[Bakry-Gentil-Ledoux Inequality in $\mathsf{RCD}(K,\infty)$ spaces]\label{pr: Bakry-Ledoux}
Let $(X,\mathsf{d},\mm)$ be an $\mathsf{RCD}(K,\infty)$ space, for some $K\in \mathbb{R}$. Then for every function $f\in L^2(X,\mm)$, $f:X\rightarrow [0,1]$ it holds
\begin{equation}\label{dis: Bakry-Ledoux}
|D(H_tf)|_{w}^2\leq j_K(t)\Big(\big[I(H_tf)\big]^2-\big[H_t(I(f))\big]^2\Big), \quad \mm\text{-a.e.},\; \text{for every } t>0.
\end{equation}
In particular,  for every $f\in L^2\cap L^{\infty}(X,\mm)$, it holds
\begin{equation}\label{dis: gradiente-f infinito}
\left\Vert|D(H_tf)|_{w}\right\Vert_{\infty}\leq \sqrt{\frac{2}{\pi}}\sqrt{j_K(t)}\left\Vert f \right\Vert_{\infty}, \quad \mm\text{-a.e.},\; \text{for every } t>0.
\end{equation}
\end{proposition}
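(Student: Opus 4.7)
The plan is to adapt the semigroup interpolation of Bakry-Gentil-Ledoux to the non-smooth $\mathsf{RCD}(K,\infty)$ setting. After reducing (by density and the regularizing properties of $H_{t}$) to an $f$ in the domain of $\Delta$ taking values in a closed subinterval of $(0,1)$, I would introduce, for $s\in[0,t]$, the interpolation
\begin{equation*}
\Lambda(s) := H_s\!\left(\sqrt{G(s,\cdot)}\,\right),\qquad G(s,x):= \bigl[I(H_{t-s}f)\bigr]^{2}(x) - c(s)\,|D H_{t-s}f|_{w}^{2}(x),
\end{equation*}
where $c:[0,t]\to[0,\infty)$ is the unique solution of $c'(s)+2K c(s) + 2 = 0$ with $c(t)=0$. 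Explicitly, $c(s)=(e^{2K(t-s)}-1)/K$ for $K\neq 0$ and $c(s)=2(t-s)$ for $K=0$, so in particular $c(0)=1/j_K(t)$. With these choices $\Lambda(t)=H_t(I(f))$ and $\Lambda(0)=\sqrt{[I(H_tf)]^2 - j_K(t)^{-1}|DH_tf|_{w}^{2}}$, so that the Bakry-Ledoux inequality \eqref{dis: Bakry-Ledoux} will follow, after squaring, from the claim $\Lambda(0)\geq \Lambda(t)$.

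The crux, and the main obstacle, is therefore to prove that $\Lambda$ is non-increasing in $s$. Formally one writes $\Lambda'(s)=H_s\bigl[(\partial_s+\Delta)\sqrt{G}\bigr]$ and expands using the chain rule for $\sqrt{\cdot}$, reducing monotonicity to the pointwise bound $2\,G\,(\partial_s+\Delta)G \leq |D G|_{w}^{2}$. Setting $u:=H_{t-s}f$, the identity $I\,I''\equiv -1$ from \eqref{id: derivate I} gives $(\partial_s+\Delta)(I^{2}(u)) = 2\bigl[(I'(u))^{2}-1\bigr]|Du|_w^{2}$, while Bochner's inequality $\Gamma_2(u)\geq K\,|Du|_w^{2}$ (equivalent to the $\mathsf{RCD}(K,\infty)$ condition and underlying the Savaré estimate \eqref{dis: Savare forte}) yields $(\partial_s+\Delta)|Du|_w^{2}\geq 2K|Du|_w^{2}$. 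With the ODE satisfied by $c$, the linear-in-$|Du|_w^{2}$ terms cancel; the remaining quadratic terms are then absorbed into $|DG|_w^{2}$ via Cauchy-Schwarz together with the ``Hessian-strengthened'' Bochner inequality available in $\mathsf{RCD}$. The hard part is making this rigorous: second-order objects such as the Hessian of $u$ or $|D(|Du|_w^{2})|_{w}$ are not elementary, so one must work with the measure-valued $\Gamma_2$ and the $H^{2,2}$-calculus of Gigli, exploit the self-improvement of the Bakry-Émery condition, and moreover regularize $\sqrt{\cdot}$ near the zero set of $G$.

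Once \eqref{dis: Bakry-Ledoux} is established, \eqref{dis: gradiente-f infinito} follows by rescaling and decomposition. For $f\in L^{2}\cap L^{\infty}$ with $f:X\to[0,\|f\|_\infty]$, apply \eqref{dis: Bakry-Ledoux} to $g:=f/\|f\|_\infty\in[0,1]$ and use $0\leq I\leq I(1/2)=1/\sqrt{2\pi}$ to obtain $\||DH_tf|_{w}\|_{\infty}\leq \|f\|_\infty\sqrt{j_K(t)/(2\pi)}$. For a general $f\in L^{2}\cap L^{\infty}$, decompose $f=f^{+}-f^{-}$ with $f^{\pm}:X\to[0,\|f\|_\infty]$; by linearity of $H_t$ and the subadditivity $|DH_tf|_{w}\leq |DH_tf^{+}|_{w}+|DH_tf^{-}|_{w}$, a factor $2$ appears and converts $1/\sqrt{2\pi}$ into the stated $\sqrt{2/\pi}\,\sqrt{j_K(t)}\,\|f\|_{\infty}$.
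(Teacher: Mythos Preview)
Your derivation of \eqref{dis: gradiente-f infinito} from \eqref{dis: Bakry-Ledoux} matches the paper exactly. The route to \eqref{dis: Bakry-Ledoux}, however, is genuinely different from the paper's, and the difference is not cosmetic.

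You follow the original Bakry--Gentil--Ledoux interpolation, placing the gradient term \emph{inside} the square root under $H_s$:
\[
\Lambda(s)=H_s\!\left(\sqrt{[I(H_{t-s}f)]^{2}-c(s)\,|DH_{t-s}f|_{w}^{2}}\,\right).
\]
Differentiating this forces you to control $(\partial_s+\Delta)|Du|_{w}^{2}$ and $|D(|Du|_{w}^{2})|_{w}$, i.e.\ second-order objects, and you correctly flag that the rigorous justification requires the measure-valued $\Gamma_2$, Gigli's $H^{2,2}$-calculus, and the Hessian-strengthened Bochner inequality. You do not carry this out, and in the $\mathsf{RCD}(K,\infty)$ setting it is a substantial piece of work: one must restrict to test functions, handle $\Delta|Du|_{w}^{2}$ as a measure, and regularize the square root near $\{G=0\}$. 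As written, this is a real gap rather than routine bookkeeping.

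The paper avoids all of this by choosing a \emph{simpler} interpolation:
\[
\Psi_\varepsilon(s)=\big[H_s\big(\phi_\varepsilon(H_{t-s}f)\big)\big]^{2},\qquad \phi_\varepsilon(x)=I(x+\varepsilon)-I(\varepsilon),
\]
with no gradient under the square. Differentiation produces $H_s(\phi_\varepsilon(u))\,H_s(-\phi_\varepsilon''(u)|Du|_w^{2})$; the Cauchy--Schwarz inequality $H_s(X)H_s(Y)\geq[H_s(\sqrt{XY})]^{2}$ together with $II''=-1$ turns this into $[H_s(|Du|_w)]^{2}$; and then the \emph{first-order} Savar\'e estimate \eqref{dis: Savare forte}, $|DH_tf|_w\leq e^{-Kt}H_t(|Df|_w)$, yields $[H_s(|DH_{t-s}f|_w)]^{2}\geq e^{2Ks}|DH_tf|_w^{2}$. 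Integrating $e^{2Ks}$ over $[0,t]$ produces exactly $1/j_K(t)$. No Hessian, no $\Gamma_2$, no second-order calculus is needed---only the chain rule for the generator, Cauchy--Schwarz for the Markov semigroup, and the $L^\infty$-gradient contraction \eqref{dis: Savare forte}. This is what makes the argument portable to $\mathsf{RCD}(K,\infty)$ with minimal machinery.

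In short: your plan is the ``smooth'' proof transplanted, and while it could in principle be completed using the full non-smooth second-order calculus, you have not done so; the paper's interpolation is engineered precisely to stay at first order, which is the key simplification you are missing.
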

\begin{proof}
Given $\varepsilon>0$, $\eta>2\varepsilon$ and $\delta>0$ sufficiently small, consider $f\in L^2(X,\mm)$ with values in $[0,1-\eta]$. We define 
\begin{align}
&\phi_{\varepsilon}(x):=I(x+\varepsilon)-I(\varepsilon),\\
&\Psi_{\varepsilon}(s):=\Big[H_s(\phi_{\varepsilon}(H_{t-s}f))\Big]^2, \quad \text{for every } s\in (0,t).
\end{align}
We notice that $\phi_{\varepsilon}(0)=0$ and $\phi_{\varepsilon}(x)\geq 0$ for every $x\in [0,1-\eta]$. Moreover, using the property \eqref{dis: principio del massimo}, $\phi_{\varepsilon}$ is Lipschitz in the range of $H_{t-s}f$. Since $t\mapsto H_{t}f$ is a locally Lipschitz map with values in $L^p(X,\mm)$ for $1<p<\infty$ (\cite[Theorem 1, Section III]{Stein}), we have that $\Psi_{\varepsilon}$ is a locally Lipschitz map with values in $L^1(X,\mm)$. 
Let  $\psi\in L^1\cap L^{\infty}(X,\mm)$ be a non-negative function.
By the chain rule for locally Lipschitz maps, the fundamental theorem of calculus for the Bochner integral and the properties of the semigroup $H_t$ we have that for any $\varepsilon>0$ and $0<\delta<t$ it holds
\begin{multline}\label{id: derivata interpolazione}
\int_X\bigg(\Big[H_{\delta}(\phi_{\varepsilon}(H_{t-\delta}f))\Big]^2-\Big[H_{t-\delta}(\phi_{\varepsilon}(H_{\delta}f))\Big]^2\bigg)\psi \, d\mm\\
=\int_{\delta}^{t-\delta}\bigg(-\frac{d}{ds}\int_X\Big[H_s(\phi_{\varepsilon}(H_{t-s}f))\Big]^2\psi \, d\mm\bigg) ds\\
=-2\int_{\delta}^{t-\delta}\bigg(\int_XH_s\big(\phi_{\varepsilon}(H_{t-s}f)\big)H_s\big(\Delta\phi_{\varepsilon}(H_{t-s}f)-\phi_{\varepsilon}'(H_{t-s}f)\Delta H_{t-s}f\big)\psi\,  d\mm\bigg)ds\\
=2\int_{\delta}^{t-\delta}\bigg(\int_X H_s\big(\phi_{\varepsilon}(H_{t-s}f)\big)H_s\big(-\phi_{\varepsilon}''(H_{t-s}f)|D H_{t-s}f|_{w}^2\big)\psi \,  d\mm\bigg)ds.
\end{multline}
Applying the Cauchy-Schwarz inequality 
\begin{equation*}
H_s(X)H_s(Y)\geq \big[H_s\big(\sqrt{XY}\big)\big]^2,
\end{equation*}
and the identity $I(x)I''(x)=-1$, for all $x\in (0,1)$, we get that the right-hand side of \eqref{id: derivata interpolazione} is bounded below by  
 \begin{equation}\label{integranda in epsilon}
2\int_{\delta}^{t-\delta}\Bigg(\int_X \Bigg[H_s\Bigg(\sqrt{\bigg(1-\frac{I(\varepsilon)}{I(H_{t-s}f+\varepsilon)}\bigg)|D H_{t-s}f|_{w}^2}\Bigg)\Bigg]^2\psi \, d\mm\Bigg)ds.
\end{equation}
Noticing  that 
$$
\int_X \Bigg[H_s\Bigg(\sqrt{\bigg(1-\frac{I(\varepsilon)}{I(H_{t-s}f+\varepsilon)}\bigg)|D H_{t-s}f|_{w}^2}\Bigg)\Bigg]^2\psi \, d\mm   \leq \int_X \Big[H_s\big(|D H_{t-s}f|_{w}\big)\Big]^2\psi \, d\mm$$
and that, for any fixed $\delta>0$,  
\begin{equation*}
\int_{\delta}^{t-\delta}\bigg(\int_X \Big[H_s\big(|D H_{t-s}f|_{w}\big)\Big]^2\psi \, d\mm\bigg)ds <\infty
\end{equation*}
thanks to the bound \eqref{strong Feller}, we can pass to the limit as $\varepsilon\to 0$ in \eqref{integranda in epsilon} using Dominated Convergence Theorem.

Since $I$ is continuous, $I(0)=0$ and $I(x)>0$ for every $x\in (0,1)$,  using the locality property \eqref{locality},
the Dominated Convergence Theorem yields
\begin{equation}\label{dis: epsilon a 0}
\int_X\bigg(\Big[H_{\delta}(I(H_{t-\delta}f))\Big]^2-\Big[H_{t-\delta}(I(H_{\delta}f))\Big]^2\bigg)\psi d\mm\geq 2\int_{\delta}^{t-\delta}\bigg(\int_X \Big[H_s\big(|D H_{t-s}f|_{w}\big)\Big]^2\psi d\mm\bigg)ds,
\end{equation}
for every $\delta\in (0,t)$. Now, we can bound the right-hand side of \eqref{dis: epsilon a 0} using the inequality \eqref{dis: Savare forte} in order to obtain
\begin{equation}
2\int_{\delta}^{t-\delta}\bigg(\int_X \Big[H_s\big(|D H_{t-s}f|_{w}\big)\Big]^2\psi d\mm\bigg)ds \geq 2\int_X \bigg(\int_{\delta}^{t-\delta}e^{2Ks}ds\bigg)|D H_tf|_{w}^2\psi \, d\mm.
\end{equation}
From \eqref{id: asintotico I} it follows that for every $0<a<1$ there exists $C=C(a)>0$ and $\bar{x}=\bar{x}(a)\in (0,1)$ such that $I(x)\leq C x^a$ for all $x\in (0, \bar{x})$. In particular, if $g\in L^2(X,\mm)$, $g:X\rightarrow [0,1-\eta]$, then $I(g)\in L^p(X,\mm)$ for every $p>2$. We now apply this argument for $p=4$, so that we can take advantage of the continuity of $I$ and the continuity of the semigroup and pass to the limit as $\delta\downarrow 0$. We obtain
\begin{equation}\label{eq:estf1-eta}
\int_X\bigg(\Big[I(H_{t}f)\Big]^2-\Big[H_{t}(I(f))\Big]^2\bigg)\psi \, d\mm\geq \frac{1}{j_K(t)}\int_X |D H_tf|_{w}^2\psi \, d\mm, 
\end{equation}
for every  $\eta>0$ sufficiently small, every $f\in L^2(X,\mm)$, $f:X\rightarrow [0,1-\eta]$.   
\\ Now, for $f\in L^2(X,\mm)$, $f:X\rightarrow [0,1]$, consider the truncation $f_{\eta}:=\min\{f,1- \eta\}$. 
Applying \eqref{eq:estf1-eta} to $f_{\eta}$, we have
\begin{equation}\label{eq:feta}
\int_X\bigg(\Big[I(H_{t}f_{\eta})\Big]^2-\Big[H_{t}(I(f_{\eta}))\Big]^2\bigg)\psi \, d\mm\geq \frac{1}{j_K(t)}\int_X |D H_tf_{\eta}|_{w}^2\psi \, d\mm. 
\end{equation}
From $f_{\eta}\to f$ in $L^{2}\cap L^{\infty}(X,\mm)$ as $\eta\downarrow 0$, we get that  $H_tf_{\eta}\to H_tf$ in $\mathbb{V}$ for every $t>0$; we can then pass to the limit as $\eta\downarrow 0$ in \eqref{eq:feta} and obtain
\begin{equation*}
\int_X\bigg(\Big[I(H_{t}f)\Big]^2-\Big[H_{t}(I(f))\Big]^2\bigg)\psi \, d\mm\geq \frac{1}{j_K(t)}\int_X |D H_tf|_{w}^2\psi \, d\mm. 
\end{equation*}
Since $\psi\in L^1\cap L^{\infty}(X,\mm)$, $\psi\geq 0$ is arbitrary,  the desired estimate \eqref{dis: Bakry-Ledoux} follows. 
\\Recalling that $0\leq I \leq \frac{1}{\sqrt{2 \pi}}$, the inequality \eqref{dis: Bakry-Ledoux} yields
\begin{equation}\label{eq:nablaHtfjK}
|D(H_tf)|_{w}\leq \sqrt{\frac{j_K(t)}{2\pi}},  \quad \mm\text{-a.e.}, \; \text{ for every  } t>0,
\end{equation}
for any $f\in L^2(X,\mm)$, $f:X\rightarrow [0,1]$. 
For any $f\in L^2\cap L^{\infty}(X,\mm)$, write $f=f^{+}-f^{-}$ with $f^{+}=\max\{f, 0\}$, $f^{-}=\max\{-f, 0\}$. Applying \eqref{eq:nablaHtfjK} to $f^{+}/\|f\|_{\infty}, f^{-}/\|f\|_{\infty}$ and summing up we obtain
\begin{align*}
\left\Vert|D H_tf|_{w}\right\Vert_{\infty} & \leq   \left\Vert|D H_tf^{+}|_{w}\right\Vert_{\infty}  +  \left\Vert|D H_tf^{-}|_{w}\right\Vert_{\infty}   \leq \sqrt{\frac{2}{\pi}}\sqrt{j_K(t)}\left\Vert f \right\Vert_{\infty}, \quad \mm\text{-a.e., }\forall t>0.
\end{align*}
\end{proof}

We next recall the definition of the first non-trivial eigenvalue of the laplacian $-\Delta$. First of all, if $\mm(X)<\infty$, the non-zero constant functions are in $L^2(X,\mm)$ and are eigenfunctions of $-\Delta$ with eigenvalue $0$. In this case,  the first non-trivial eigenvalue is given by $\lambda_{1}$
\begin{equation}\label{eq:defla1}
\lambda_1= \inf\bigg\{\frac{\int_X |D f|_{w}^2 d\mm}{\int_X |f|^2d\mm}: \ 0\not\equiv f\in \mathbb{V}, \int_X fd\mm=0\bigg\}.
\end{equation}
When $\mm(X)=\infty$, $0$  may not  be \nc an eigenvalue of $-\Delta$ and the first eigenvalue is characterized by
\begin{equation}\label{caratterizzazione lambda_0}
\lambda_0= \inf\bigg\{\frac{\int_X |D f|_{w}^2 d\mm}{\int_X |f|^2d\mm}: \  \ 0\not\equiv f\in \mathbb{V} \bigg\}.
\end{equation}

Observe that, by the very definition of Cheeger energy \eqref{eq:defChm}, the definition \eqref{eq:defla1Intro} of $\lambda_{1}$ (resp. \eqref{eq:defla0Intro} of $\lambda_{0}$) given in the Introduction in terms of slope of Lipschitz functions, is equivalent to   \eqref{eq:defla1} (resp. \eqref{caratterizzazione lambda_0}).

It is also convenient to set
\begin{equation}\label{eq:defJKt}
J_K(t):=\sqrt{\frac{2}{\pi}}\int_0^t \sqrt{j_K(s)}\, ds,
\end{equation}
where $j_{K}$ was defined in  \eqref{eq:defjkt}.

\begin{proof}[Proof of Theorem $1.1$]
\textbf{Step 1}:  Proof of  \eqref{implicitlambda1Intro}, the case  $\mm(X)=1$.  
\\First of all, we claim that for any $f\in L^2(X,\mm)$ with zero mean it holds 
\begin{equation}\label{dis: norma L2-lambda1}
\left\Vert H_tf\right\Vert_{2}\leq e^{-\lambda_1t}\left\Vert f\right\Vert_2.
\end{equation} 
To prove \eqref{dis: norma L2-lambda1} let $0\not\equiv f\in L^{2}(X, \mm)$ such that $0=\int_X fd\mm=\int_X H_tfd\mm$. Then 
\begin{equation}\label{eq:Gronwall}
2\lambda_1\int_X |H_tf|^2d\mm \leq 2\int_X |D(H_tf)|_{w}^2d\mm=-2\int_X H_tf\Delta(H_tf)d\mm=-\frac{d}{dt}\int_X |H_tf|^2d\mm,
\end{equation}
and the Gronwall's inequality yields \eqref{dis: norma L2-lambda1}.
\\Next we claim that, by duality, the bound \eqref{dis: gradiente-f infinito} implies
\begin{equation}\label{dis: f meno semigruppo-grad 1}
\left\Vert f-H_t f \right\Vert_{1}\leq J_K(t)\left\Vert |D f|_{w} \right\Vert_{1}, \quad \text{for all } f\in \mathsf{Lip}_b(X),
\end{equation}
where $J_{K}(t)$ was defined in \eqref{eq:defJKt}. 
\\To prove \eqref{dis: f meno semigruppo-grad 1} we take a function $g$, $\left\Vert g \right\Vert_{\infty} \leq 1$, and observe that
\begin{multline}\nonumber
\int_X g(f-H_t f)d\mm=-\int_{0}^t\Big(\int_X g\Delta H_s f d\mm\Big)ds=\int_{0}^t \Big(\int_X D H_s g \cdot D f d\mm\Big)ds \\
\leq \left\Vert |D f|_{w} \right\Vert_{1}\int_{0}^{t} \left\Vert|D(H_sg)|_{w}\right\Vert_{\infty}ds.
\end{multline}
Since $g$ is arbitrary,  the claimed \eqref{dis: f meno semigruppo-grad 1} follows from the last estimate combined with \eqref{dis: gradiente-f infinito}. 
\\ We now combine the above claims in order to conclude the proof.
Let $A\subset X$ be a Borel subset and let $f_n\in \mathsf{Lip}_b(X)$, $f_n\rightarrow \chi_A$ in $L^1(X,\mm)$,  be a recovery sequence for the perimeter of the set $A$, i.e.: 
$$
\mathrm{Per}(A)= \lim_{n\to \infty} \int_{X} |\nabla f_{n}|\, d\mm \geq  \limsup_{n\to \infty} \int_{X} |D f_{n}|_{w}\, d\mm.
$$
Inequality \eqref{dis: f meno semigruppo-grad 1} passes to the limit since $H_t$ is continuous in $L^1(X,\mm)$ \cite[Theorem 4.16]{AGS} and we can write
\begin{multline}\label{dis: perimetro-differenza misura e semigruppo}
J_K(t)\mathrm{Per}(A)\geq \left\Vert \chi_A-H_t(\chi_A)\right\Vert_1=\int_A [1-H_t(\chi_A)]d\mm+\int_{A^c}H_t(\chi_A)d\mm \\
=2\Big(\mm(A)-\int_A H_t(\chi_A)d\mm\Big) = 2\Big(\mm(A)-\int_X \chi_A  H_{t/2}( H_{t/2} (\chi_A))d\mm\Big)\\
= 2\Big(\mm(A)-\int_X H_{t/2}(\chi_A)  H_{t/2} (\chi_A) d\mm\Big) = 2\big(\mm(A)-\left\Vert H_{t/2}(\chi_A)\right\Vert^2_{2}\big),
\end{multline}
where we used properties \eqref{mass preserving}, \eqref{dis: principio del massimo}, together with the semigroup property and the self-adjointness of the semigroup.
We observe that $\int_{X}  H_{t/2}(\chi_A-\mm(A)) \, d\mm=0$ thanks to \eqref{mass preserving} and the fact that $H_t \mathds{1}=\mathds{1}$ when $\mm(X)=1$. We can thus apply  \eqref{dis: norma L2-lambda1} in order to bound $\left\Vert H_{t/2}(\chi_A)\right\Vert^2_{2}$ in the following way
\begin{equation}\label{dis: semigruppo-funzione caratteristica-2}
\left\Vert H_{t/2}(\chi_A)\right\Vert^2_{2}=\mm(A)^2+\left\Vert H_{t/2}(\chi_A-\mm(A))\right\Vert^2_{2}\leq \mm(A)^2+e^{-\lambda_1 t}\left\Vert \chi_A-\mm(A)\right\Vert^2_{2}.
\end{equation}
A direct computation gives $\left\Vert \chi_A-\mm(A)\right\Vert^2_{2}=\mm(A)(1-\mm(A))$,
so that  the combination of \eqref{dis: perimetro-differenza misura e semigruppo}  and  \eqref{dis: semigruppo-funzione caratteristica-2} yields
\begin{equation}\label{dis: perimetro-misura}
J_K(t)\mathrm{Per}(A)\geq 2\mm(A)(1-\mm(A))(1-e^{-\lambda_1t}), \quad \text{ for every } t>0.
\end{equation}
Recalling that in the definition of the Cheeger constant $h(X)$  one considers only Borel subsets $A\subset X$ with  $\mm(A)\leq 1/2$,
the last inequality \eqref{dis: perimetro-misura}  gives \eqref{implicitlambda1Intro}.
\\

\textbf{Step 2}: Proof of  \eqref{implicitlambda0Intro}, the case  $\mm(X)=\infty$.  
\\Arguing as in  \eqref{eq:Gronwall} using Gronwall Lemma, for any $f\in L^2(X,\mm)$ it holds
\begin{equation}\label{dis: norma L2-lambda0 misura infinita}
\left\Vert H_tf\right\Vert_{2}\leq e^{-\lambda_0 t}\left\Vert f\right\Vert_2.
\end{equation} 
Note that in order to establish \eqref{dis: perimetro-differenza misura e semigruppo},  the finiteness of $\mm(X)$ played no role. Now we can directly use \eqref{dis: norma L2-lambda0 misura infinita} to bound the right-hand side of the equation \eqref{dis: perimetro-differenza misura e semigruppo} in order to achieve 
\begin{equation}\label{dis: cheeger-autovalore lambda0}\nonumber
\frac{\mathrm{Per}(A)}{\mm(A)}\geq 2\sup_{t>0}\Big\{\frac{1-e^{-\lambda_0 t}}{J_K(t)}\Big\}, 
\end{equation}
for any Borel subset $A\subset X$ with $\mm(A)<\infty$. The estimate \eqref{implicitlambda0Intro}  follows. 
\end{proof}

\subsection{From the implicit to explicit bounds (and sharpness in case $K>0$).\\ Proof of Corollary \ref{cor:explicitbounds}}\label{sez: da impliciti a espliciti}

In this section we show how to derive explicit bounds for  $\lambda_1$ (resp. $\lambda_{0}$) in term of the Cheeger constant $h(X)$, starting from \eqref{implicitlambda1Intro} (resp. \eqref{implicitlambda0Intro}). We also show that \eqref{implicitlambda1Intro} is sharp, since equality is achieved on the Gaussian space.
\\First of all,  the expression of the function $J_K$  defined in \eqref{eq:defJKt} can be explicitly computed as:

\begin{equation}\label{eq:ExplJK}
J_K(t)=\begin{cases}\sqrt{\frac{2}{\pi K}}\arctan\Big(\sqrt{e^{2Kt}-1}\Big)  \ \ &\textrm{if} \ \  K>0,\\
\frac{2}{\sqrt{\pi}}\sqrt{t} \ \ &\textrm{if} \ \ K=0,\\
\sqrt{-\frac{2}{\pi K}}\arctanh{\Big(\sqrt{1-e^{2Kt}}\Big)} \ \ &\textrm{if} \ \ K<0.\end{cases} \qquad \forall t>0
\end{equation}

\subsubsection*{ \textbf{Case $K=0$}}
When $K=0$, the estimate \eqref{implicitlambda1Intro} combined with \eqref{eq:ExplJK} gives  

\begin{equation}
h(X)\geq \frac{\sqrt{\pi}}{2}\sup_{t>0}\frac{1-e^{-\lambda_1t}}{\sqrt{t}}=\frac{\sqrt{\pi\lambda_1}}{2}\sup_{T>0}\frac{1-e^{-T}}{\sqrt{T}},
\end{equation}
where we set $T=\lambda_{1} t$ in the last identity.

Let $W_{-1}:[-1/e,0)\rightarrow (-\infty,-1]$ be  the lower branch of the Lambert function, i.e. the inverse of the function $x\mapsto xe^x$ in the interval $(-\infty,-1]$. An easy computation yields
\begin{equation}
M:=\sup_{T>0}\frac{1-e^{-T}}{\sqrt{T}}=\frac{\sqrt{-4W_{-1}\Big(-\frac{1}{2\sqrt{e}}\Big)-2}}{2W_{-1}\Big(-\frac{1}{2\sqrt{e}}\Big)}, \ \  \mathrm{achieved \ at} \ T=-W_{-1}\Big(-\frac{1}{2\sqrt{e}}\Big)-\frac{1}{2}.
\end{equation} 
A good lower estimate of $M$ is given by $2/\pi$. Using this bound, we obtain
$$\lambda_1< \pi h^2(X).$$

\subsubsection*{ \textbf{Case $K>0$}} 
We start with the following
\begin{lemma}\label{lemma: f_1 crescente}
Let $f_1:(0,\infty)\rightarrow (0,\infty)$ be defined as
\begin{equation}
f_1(x):=\frac{\sqrt{x}}{\arctan\Big(\sqrt{e^{Tx}-1}\Big)}, 
\end{equation}
where $T>0$ is a fixed number.
Then $f_1$ is an increasing function and $f_1(x)\geq \frac{1}{\sqrt{T}}.$
\end{lemma}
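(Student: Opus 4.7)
The plan is to prove monotonicity first and then deduce the lower bound from the behavior as $x \to 0^+$. A direct computation gives
\[
\lim_{x \to 0^+} f_1(x) = \lim_{x \to 0^+} \frac{\sqrt{x}}{\arctan(\sqrt{e^{Tx}-1})} = \frac{1}{\sqrt{T}},
\]
since $\arctan(\sqrt{e^{Tx}-1}) \sim \sqrt{Tx}$ as $x \to 0^+$. So once monotonicity is established, the lower bound $f_1(x) \geq 1/\sqrt{T}$ follows at once.

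For the monotonicity, set $A(x) := \arctan(\sqrt{e^{Tx}-1})$. The chain rule yields the clean identity
\[
A'(x) = \frac{T}{2\sqrt{e^{Tx}-1}},
\]
because the factor $e^{Tx}$ in the numerator of the inner derivative cancels with $1+(e^{Tx}-1) = e^{Tx}$ coming from the derivative of $\arctan$. Differentiating $f_1 = \sqrt{x}/A$ then shows that the sign of $f_1'(x)$ agrees with the sign of $A(x) - 2xA'(x)$. Substituting the formula for $A'$, monotonicity of $f_1$ is thus equivalent to
\[
\arctan\!\big(\sqrt{e^{Tx}-1}\big) > \frac{Tx}{\sqrt{e^{Tx}-1}}, \qquad x>0.
\]
After the substitution $v := \sqrt{e^{Tx}-1}$ (so that $Tx = \log(1+v^2)$), this reduces to the one-variable inequality
\[
v\arctan(v) > \log(1+v^2) \qquad \text{for all } v>0.
\]

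The main (and only) obstacle is this calculus inequality, which I would settle by a two-step derivative comparison. Call $F(v) := v\arctan(v) - \log(1+v^2)$; then $F(0)=0$ and
\[
F'(v) = \arctan(v) + \frac{v}{1+v^2} - \frac{2v}{1+v^2} = \arctan(v) - \frac{v}{1+v^2}.
\]
Again $F'(0)=0$, and a further differentiation gives
\[
F''(v) = \frac{1}{1+v^2} - \frac{1-v^2}{(1+v^2)^2} = \frac{2v^2}{(1+v^2)^2} > 0 \quad \text{for } v>0,
\]
so $F'(v)>0$ for $v>0$, and hence $F(v)>0$ for $v>0$. This closes the argument: $f_1'(x)>0$ for all $x>0$, so $f_1$ is strictly increasing, and consequently $f_1(x) \geq \lim_{x\to 0^+} f_1(x) = 1/\sqrt{T}$.
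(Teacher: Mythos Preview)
Your proof is correct and follows essentially the same route as the paper: reduce monotonicity of $f_1$ to the inequality $v\arctan v \geq \log(1+v^2)$ via the substitution $v=\sqrt{e^{Tx}-1}$, then verify this by differentiating. The only difference is that you add the extra step of computing $F''$ to justify $F'(v)=\arctan v - v/(1+v^2)\geq 0$, which the paper simply asserts.
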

\begin{proof}
The function $f_1$ is differentiable and the derivative of $f_1$ is non-negative if and only if 
\begin{equation*}
\sqrt{e^{Tx}-1}\arctan\big(\sqrt{e^{Tx}-1}\big)-Tx\geq 0, \ \ \ x>0.
\end{equation*}
We put $y:=\sqrt{e^{Tx}-1}$ so that we have to prove 
\begin{equation}\label{dis: g_1}
y\arctan(y)-\log(y^2+1)\geq 0, \ \ \ y>0.
\end{equation}
Called $g_1(y)$ the function $g_1(y):=y\arctan(y)-\log(y^2+1)$, we have that $g_1(0)=0$ and 
$$g'_1(y)=\arctan(y)-\frac{y}{1+y^2}\geq 0,$$
so that the inequality \eqref{dis: g_1} is proved and $f_1$ is increasing for any $T>0$.
The proof is finished since
$$\lim_{x\downarrow 0}f_1(x)=\frac{1}{\sqrt{T}}.$$
\end{proof}
Rewriting the estimate \eqref{implicitlambda1Intro} using \eqref{eq:ExplJK} in case $K>0$, we obtain
\begin{align}
\sqrt{\frac{2}{\pi}}h(X)&\geq \sqrt{K}\sup_{t>0}\frac{1-e^{-\lambda_1t}}{\arctan\Big(\sqrt{e^{2Kt}-1}\Big)} \nonumber\\
&=\sqrt{\lambda_1}\sup_{T>0}\frac{\sqrt{\frac{K}{\lambda_1}}}{\arctan\bigg(\sqrt{e^{2\frac{K}{\lambda_1}T}-1}\bigg)}\Big(1-e^{-T}\Big).\label{ug: riscrittura bound K>0}
\end{align}

Thanks to the Lemma \ref{lemma: f_1 crescente} it is clear that we can always obtain the same lower bound of the case $K=0$ (as expected), but this can be improved as soon as we have a positive lower bound of the quotient $K/\lambda_1$. 
Indeed, let us suppose $K/\lambda_1\geq c>0.$  Then, observing that 
$$\sup_{T>0} \frac{1-e^{-T}}{\arctan(\sqrt{e^{2cT}-1})}\geq \lim_{T\to +\infty} \frac{1-e^{-T}}{\arctan(\sqrt{e^{2cT}-1})}=\frac{2}{\pi},$$
from \eqref{ug: riscrittura bound K>0}, we obtain 
\begin{equation}\label{dis: bound K>0 e K/lambda>c}
\sqrt{\frac{2}{c\pi}}h(X)\geq \sqrt{\lambda_1}\sup_{T>0}\frac{1-e^{-T}}{\arctan(\sqrt{e^{2cT}-1})}\geq  \frac{2}{\pi} \sqrt{\lambda_1}.
\end{equation}

When $X=\mathbb{R}^d$ endowed with the Euclidean distance $\mathsf{d}(x,y)=|x-y|$  and the Gaussian measure $(2\pi)^{-d/2}e^{-|x|^2/2}d\mathcal{L}^d$, $1\leq d\in \mathbb{N}$, we have that $h(X)=\sqrt{\frac{2}{\pi}}$, $K=1$ and $\lambda_1=1$ (see \cite[Section 4.1]{BGL2}). Thus, we can take $c=1$ and the equality in \eqref{dis: bound K>0 e K/lambda>c} is achieved, making sharp the lower bound.

\subsubsection*{ \textbf{Case $K<0$}}
We begin by noticing that 
\begin{equation}\label{eq:JKKNeglog}
J_K(t)=\sqrt{-\frac{2}{\pi K}}\arctanh{\Big(\sqrt{1-e^{2Kt}}\Big)}=\sqrt{-\frac{2}{\pi K}}\log\Big(e^{-Kt}+\sqrt{e^{-2Kt}-1}\Big).
\end{equation}

The following lemma holds:

\begin{lemma}\label{lemma: f_2 decrescente}
Let $f_2:(0,\infty)\rightarrow (0,\infty)$ be defined as
\begin{equation}
f_2(x):=\frac{\sqrt{x}}{\log\big(e^{Tx}+\sqrt{e^{2Tx}-1}\big)}, 
\end{equation}
where $T>0$ is a fixed number.
Then $f_2$ is a decreasing function.
\end{lemma}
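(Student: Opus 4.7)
The plan is to reduce the monotonicity assertion to an elementary inequality for hyperbolic functions. First I would rescale: setting $y:=Tx$ one has
\[
f_2(x) \;=\; \frac{1}{\sqrt T}\cdot \frac{\sqrt y}{\log\!\bigl(e^{y}+\sqrt{e^{2y}-1}\bigr)},
\]
so it suffices to show that $F(y):=\dfrac{\sqrt y}{\log(e^y+\sqrt{e^{2y}-1})}$ is decreasing on $(0,\infty)$; equivalently, that $G(y):=F(y)^2=\dfrac{y}{\bigl[\log(e^y+\sqrt{e^{2y}-1})\bigr]^2}$ is decreasing.

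Next, I would perform the hyperbolic substitution $e^{y}=\cosh t$ (so $y=\log\cosh t$ and $t\in(0,\infty)$ corresponds bijectively to $y\in(0,\infty)$). Using the identity $\cosh t+\sinh t = e^{t}$, one gets
\[
\log\!\bigl(e^{y}+\sqrt{e^{2y}-1}\bigr)\;=\;\log(\cosh t+\sinh t)\;=\;t,
\]
so $G(y)=\dfrac{\log\cosh t}{t^{2}}$. Since $y$ is strictly increasing in $t$, the monotonicity of $G$ in $y$ is equivalent to that of $y\mapsto G$ in $t$, i.e.\ to proving that $t\mapsto \dfrac{\log\cosh t}{t^{2}}$ is decreasing on $(0,\infty)$.

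Differentiating, the latter monotonicity is equivalent to the pointwise inequality
\[
t\,\tanh t \;\le\; 2\log\cosh t \qquad \text{for every } t>0,
\]
or equivalently $\varphi(t):=\log\cosh t - \tfrac{t}{2}\tanh t \ge 0$. Since $\varphi(0)=0$, I would conclude by checking that $\varphi'(t)\ge 0$: a direct computation yields
\[
\varphi'(t)\;=\;\frac{1}{2\cosh^{2} t}\bigl(\sinh t\cosh t - t\bigr)\;=\;\frac{1}{4\cosh^{2} t}\bigl(\sinh(2t)-2t\bigr),
\]
which is strictly positive for $t>0$ by the elementary inequality $\sinh(2t)>2t$. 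This gives $\varphi(t)>0$ for $t>0$, hence the strict monotonicity of $F$, and therefore of $f_2$.

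The steps are all routine once the substitution $e^{Tx}=\cosh t$ is made; the only mild obstacle is to spot that substitution, which makes the logarithmic term collapse to $t$ and turns the problem into the standard hyperbolic inequality $\sinh(2t)\ge 2t$.
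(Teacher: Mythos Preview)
Your proof is correct, and the approach differs from the paper's, though the two are related by a change of variable.

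The paper differentiates $f_2$ directly and, after some manipulation, substitutes $y:=\sqrt{1-e^{-2Tx}}\in(0,1)$; the claim then reduces to the inequality
\[
\Big(1+\tfrac{y}{2}\Big)\log(1+y)+\Big(1-\tfrac{y}{2}\Big)\log(1-y)\;\le\;0,\qquad 0<y<1,
\]
which is dispatched by observing that the left-hand side is a concave function of $y$ vanishing together with its derivative at $y=0$. Your route instead substitutes $e^{Tx}=\cosh t$, so that the denominator $\log\bigl(e^{Tx}+\sqrt{e^{2Tx}-1}\bigr)=\operatorname{arccosh}(e^{Tx})$ collapses to $t$, and the problem becomes the monotonicity of $t\mapsto (\log\cosh t)/t^{2}$; this in turn follows from $\sinh(2t)>2t$, which is immediate.

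The two arguments are in fact equivalent: the paper's variable is $y=\tanh t$ in your notation, and a short computation shows that your key inequality $t\tanh t\le 2\log\cosh t$ coincides with the paper's inequality under this substitution. What your choice of substitution buys is that the denominator becomes linear, so the algebra on the way to the final elementary inequality is shorter and the endgame ($\sinh(2t)>2t$) is a one-liner rather than a concavity check. The paper's approach, on the other hand, avoids hyperbolic functions entirely and stays within elementary logarithms on $(0,1)$.
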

\begin{proof}
A direct computation shows that the derivative of $f_2$ is non-positive if and only if 
\begin{equation*}
\sqrt{e^{2Tx}-1} \; \log\Big(e^{Tx}+\sqrt{e^{2Tx}-1}\Big)\leq 2Txe^{Tx},\quad \text{for all }   x>0,
\end{equation*}
which is equivalent to 
\begin{equation}\label{dis: versione in x f_2}
\sqrt{1-e^{-2Tx}} \; \log\Big(1+\sqrt{1-e^{-2Tx}}\Big)\leq \Big(2-\sqrt{1-e^{-2Tx}}\Big)Tx, \quad \text{for all }x>0.
\end{equation}
We put $y:=\sqrt{1-e^{-2Tx}}$, and we write \eqref{dis: versione in x f_2} as
\begin{equation*}
y\log(1+y)+\frac{1}{2}(2-y)\log(1-y^2)\leq 0, \quad \text{for all }0<y<1,
\end{equation*}
which in turn is equivalent to 
\begin{equation}\label{eq:lastExprlemma2}
\left(1+\frac{y}{2} \right)\log(1+y)+\left(1-\frac{y}{2}\right)\log(1-y)\leq 0, \quad \text{for all }0<y<1.
\end{equation}
Now define $g_{2}:(0,1)\to \R$ as $g_2(y):=(1+\frac{y}{2})\log(1+y)+(1-\frac{y}{2})\log(1-y)$ and observe that  $g_2$ is concave with $g_2(0)=0$, $g'_2(0)=0$. Thus $g_{2}$ is non-positive on $(0,1)$ and the inequality \eqref{eq:lastExprlemma2} is proved.
\end{proof}
The combination of  \eqref{implicitlambda1Intro},  \eqref{eq:ExplJK} and \eqref{eq:JKKNeglog} implies that if $(X,\mathsf{d},\mm)$ is an $\mathsf{RCD}(K,\infty)$ space with $K<0$ and $\mm(X)=1$ then
\begin{equation}\label{eq:HlamndaKneg}
h(X)\geq \sqrt{-\frac{\pi K}{2}}\sup_{t>0}\frac{1-e^{-\lambda_1t}}{\log\Big(e^{-Kt}+\sqrt{e^{-2Kt}-1}\Big)}.
\end{equation}
We make two different choices: 
\begin{itemize}
\item When $\lambda_1\leq -K$, we choose $t=-\frac{1}{K}$ in \eqref{eq:HlamndaKneg} so that 
\begin{equation}\label{eq:la1h2X1}
h(X)\geq \sqrt{-\frac{\pi K}{2}}\frac{1-e^{\frac{\lambda_1}{K}}}{\log\Big(e+\sqrt{e^2-1}\Big)} \geq \lambda_1\sqrt{-\frac{\pi}{2K}}\frac{1-\frac{1}{e}}{\log\Big(e+\sqrt{e^2-1}\Big)},
\end{equation}
where we used the inequality 
$$1-e^{-x}\geq \left(1-\frac{1}{e} \right)x, \quad \text{for all } 0\leq x\leq 1.$$

\item When $\lambda_1>-K$, we choose $t=\frac{1}{\lambda_1}$ in \eqref{eq:HlamndaKneg} so that 
\begin{equation*}
h(X)\geq \sqrt{\frac{\pi}{2}}\sqrt{\lambda_1}\left(1-\frac{1}{e}\right)\frac{\sqrt{-\frac{K}{\lambda_1}}}{\log\bigg(e^{-\frac{K}{\lambda_1}}+\sqrt{e^{-2\frac{K}{\lambda_1}}-1}\bigg)}. 
\end{equation*}
Applying now Lemma \ref{lemma: f_2 decrescente}, we obtain
\begin{equation}\label{eq:la1h2X}
\lambda_1\leq \frac{2\Big(\log\big(e+\sqrt{e^{2}-1}\big)\Big)^2}{\pi\Big(1-\frac{1}{e}\Big)^2}h(X)^2.
\end{equation}
\end{itemize}
The combination of \eqref{eq:la1h2X1} and \eqref{eq:la1h2X} gives that, if $(X,\mathsf{d},\mm)$ is an $\mathsf{RCD}(K,\infty)$ space with $K<0$ and $\mm(X)=1$
\begin{multline}\label{eq:65}
\lambda_1\leq \max\bigg\{\sqrt{-K}\frac{\sqrt{2}\log\big(e+\sqrt{e^{2}-1}\big)}{\sqrt{\pi}(1-\frac{1}{e})}h(X),\frac{2\Big(\log\big(e+\sqrt{e^{2}-1}\big)\Big)^2}{\pi\Big(1-\frac{1}{e}\Big)^2}h(X)^2\bigg\}\\
<\max\left\{\frac{21}{10} \sqrt{-K}h(X),\frac{22}{5}h(X)^2 \right\}.
\end{multline}

In case $(X,\mathsf{d},\mm)$ is an $\mathsf{RCD}(K,\infty)$ space with $K<0$ and $\mm(X)=\infty$ then, using \eqref{implicitlambda0Intro} instead of \eqref{implicitlambda1Intro}, the estimates \eqref{eq:HlamndaKneg} and \eqref{eq:65}   hold with $\lambda_{1}$ replaced by $\lambda_{0}$ and $h(X)$ replaced by $h(X)/2$. Thus, in case $\mm(X)=\infty$, we obtain:
\begin{multline}
\lambda_0\leq \max\bigg\{\sqrt{-K}\frac{\log\big(e+\sqrt{e^{2}-1}\big)}{\sqrt{2\pi}(1-\frac{1}{e})}h(X),\frac{\Big(\log\big(e+\sqrt{e^{2}-1}\big)\Big)^2}{2\pi\Big(1-\frac{1}{e}\Big)^2}h(X)^2\bigg\}\\
<\max\left\{\frac{21}{20} \sqrt{-K}h(X),\frac{11}{10}h(X)^2 \right\}.
\end{multline}

\hfill$\Box$

\begin{remark}
Another bound, similar to the one obtained  in the case $K>0$, can be achieved in the presence of a lower bound for $K/\lambda_1$, if $\mm(X)=1$ (resp. a lower bound for $K/\lambda_0$, if $\mm(X)=\infty$). To see this, let us suppose $K/\lambda_1\geq -c, \ c>0$ (resp. $K/\lambda_0\geq -c$). Then, using  \eqref{implicitlambda1Intro} (resp. \eqref{implicitlambda0Intro}), \eqref{eq:ExplJK}  and Lemma \ref{lemma: f_2 decrescente}, we have that (resp. the left-hand side can be improved to $h(X)/\sqrt{2 \pi}$)
\begin{multline}
\sqrt{\frac{2}{\pi}}h(X)\geq \sqrt{\lambda_1}\sup_{T>0}\frac{\sqrt{-\frac{K}{\lambda_1}}}{\log\bigg(e^{-\frac{K}{\lambda_1}T}+\sqrt{e^{-2\frac{K}{\lambda_1}T}-1}\bigg)}\Big(1-e^{-T}\Big)\\
\geq \sqrt{c\lambda_1}\sup_{T>0}\frac{1-e^{-T}}{\log\big(e^{cT}+\sqrt{e^{2cT}-1}\big)}.
\end{multline}
\end{remark}

\section{Appendix A: Cheeger's inequality in general metric measure spaces}
The Buser-type inequalities of Theorem  \ref{thm:MainImpl} and Corollary  \ref {cor:explicitbounds}  give an upper bound on $\lambda_{1}$ (resp. on $\lambda_{0}$, in case $\mm(X)=\infty$) in  terms of the Cheeger constant $h(X)$. It is natural to ask if also a reverse inequality holds, namely if it possible to give a lower bound on  $\lambda_{1}$  (resp. on $\lambda_{0}$, in case $\mm(X)=\infty$) in terms of $h(X)$. The answer is affirmative in the higher generality of metric measure spaces with a non-negative locally bounded measure \emph{without curvature conditions}, see  Theorem \ref{thm:Cheeger}  below.
This generalizes to the metric measure setting a celebrated result by Cheeger  \cite{ChIn}, known as Cheeger's inequality. In contrast to the previous section, here we do not assume the separability of the space.
 
 A key tool in the proof of Cheeger's inequality is the co-area formula; more precisely, in the arguments it is enough to have an inequality in the co-area formula. For the reader's convenience, we give below the statement and a self-contained proof.

\begin{proposition}[Coarea inequality]\label{thm:coarea}
Let $(X,\mathsf{d})$ be a complete metric space and let $\mm$ be a non-negative Borel measure finite on bounded subsets.
\\Let $u\in {\mathsf {Lip}}_{b}(X)$, $u:X\to [0,\infty)$ and set $M=\sup_{X} u$. 
Then for $\mathcal{L}^{1}$-a.e. $t>0$ the set $\{u> t \}$ has finite perimeter and 
\begin{equation}\label{eq:coarea}
\int_{0}^{M} \Per(\{u>t\}) \, d t \leq \int_{X} |\nabla u| \, d\mm.
\end{equation}
\end{proposition}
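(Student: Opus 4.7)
The plan is to build an explicit Lipschitz approximation of $\chi_{\{u>t\}}$ from $u$ itself and then use Fubini to trade an integral in $t$ for one against $\mm$.

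Concretely, I fix $\varepsilon>0$ small and, for each $t\in(0,M)$, define
\[
f^{\varepsilon}_{t}(x):=\max\Big\{0,\min\Big\{1,\tfrac{u(x)-t}{\varepsilon}\Big\}\Big\}, \qquad x\in X.
\]
Since $u\in \mathsf{Lip}_{b}(X)$ has bounded support, so does each $f^{\varepsilon}_{t}$, and $f^{\varepsilon}_{t}\in \mathsf{Lip}_{b}(X)$. The first step is to show $f^{\varepsilon}_{t}\to \chi_{\{u>t\}}$ in $L^{1}(X,\mm)$ as $\varepsilon\downarrow 0$: the convergence is pointwise and the functions are dominated by $\chi_{\mathrm{supp}(u)}\in L^{1}(X,\mm)$ (recall $\mm$ is finite on bounded sets), so dominated convergence applies. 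By the definition \eqref{eq:defChConst-style perimeter} of $\mathrm{Per}$,
\[
\mathrm{Per}(\{u>t\})\;\leq\; \liminf_{\varepsilon\downarrow 0}\int_{X}|\nabla f^{\varepsilon}_{t}|\,d\mm \qquad\text{for every }t\in(0,M).
\]

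The second step is the pointwise slope estimate
\[
|\nabla f^{\varepsilon}_{t}|(x)\;\leq\; \tfrac{1}{\varepsilon}\,|\nabla u|(x)\,\chi_{\{t\leq u\leq t+\varepsilon\}}(x),
\]
which I would get as follows. Writing $f^{\varepsilon}_{t}=\varphi\circ u$ with $\varphi(s)=\max\{0,\min\{1,(s-t)/\varepsilon\}\}$ which is $(1/\varepsilon)$-Lipschitz, the general chain inequality $|\nabla (\varphi\circ u)|\leq \mathrm{Lip}(\varphi)\,|\nabla u|$ for the slope gives the factor $|\nabla u|/\varepsilon$. On the open sets $\{u<t\}$ and $\{u>t+\varepsilon\}$ the function $f^{\varepsilon}_{t}$ is locally constant, so its slope vanishes there; on the strip $\{t\leq u\leq t+\varepsilon\}$ (including the boundary level sets $\{u=t\}$ and $\{u=t+\varepsilon\}$) one keeps the $|\nabla u|/\varepsilon$ bound from the chain estimate.

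The third step is Fubini. Integrating in $t$,
\[
\int_{0}^{M}\!\!\int_{X}|\nabla f^{\varepsilon}_{t}|\,d\mm\,dt
\;\leq\; \int_{X}\frac{|\nabla u|(x)}{\varepsilon}\,\mathcal{L}^{1}\!\big(\{t\in(0,M):\,t\leq u(x)\leq t+\varepsilon\}\big)\,d\mm(x)
\;\leq\;\int_{X}|\nabla u|\,d\mm,
\]
where the inner $\mathcal{L}^{1}$-measure is bounded by $\varepsilon$ for each $x$. Combining with the previous step via Fatou's lemma in $t$,
\[
\int_{0}^{M}\mathrm{Per}(\{u>t\})\,dt
\;\leq\;\int_{0}^{M}\liminf_{\varepsilon\downarrow 0}\!\int_{X}|\nabla f^{\varepsilon}_{t}|\,d\mm\,dt
\;\leq\;\liminf_{\varepsilon\downarrow 0}\int_{0}^{M}\!\!\int_{X}|\nabla f^{\varepsilon}_{t}|\,d\mm\,dt
\;\leq\;\int_{X}|\nabla u|\,d\mm.
\]
Since $|\nabla u|\in L^{1}(X,\mm)$ (here I use $|\nabla u|\leq \mathrm{Lip}(u)$ together with the bounded support of $u$ and the local finiteness of $\mm$), the left-hand side is finite, so $\mathrm{Per}(\{u>t\})<\infty$ for $\mathcal{L}^{1}$-a.e. $t\in(0,M)$, and the desired inequality \eqref{eq:coarea} follows.

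The only delicate point is the pointwise slope estimate at boundary points where $u(x)=t$ or $u(x)=t+\varepsilon$; there $f^{\varepsilon}_{t}$ is not locally constant, but the chain inequality for the slope handles it uniformly. Everything else is standard truncation plus Fubini, so I do not anticipate a real obstacle; separability of $X$ is not used, and no curvature assumption is invoked.
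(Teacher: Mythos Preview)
Your proof is correct and uses the same core approximation as the paper: $\chi_{\{u>t\}}$ is approximated by a piecewise linear truncation $\varphi_\varepsilon\circ u$ (the paper ramps on $[t-h,t]$, you on $[t,t+\varepsilon]$, an immaterial difference). Where the two arguments diverge is in how the integrated inequality is extracted. The paper fixes $t$, identifies $\int_X|\nabla u_h|\,d\mm$ with a difference quotient of the monotone function $V(t)=\int_{\{u>t\}}|\nabla u|\,d\mm$, deduces $\Per(\{u>t\})\le -V'(t)$ at every differentiability point of $V$, and then integrates using $-\int_0^M V'\,dt\le V(0)-V(M)$. You instead keep $\varepsilon$ fixed, integrate jointly in $(t,x)$ via Tonelli, and send $\varepsilon\downarrow 0$ with Fatou in $t$. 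Your route is a little more streamlined: it avoids the a.e.\ differentiation of monotone functions and the need to single out the null set of levels with $\mm(\{u=t\})>0$. The paper's route, on the other hand, delivers the pointwise bound $\Per(\{u>t\})\le -V'(t)$, which is marginally more information than the integrated inequality alone.
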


\begin{proof}
The proof is quite standard, but since we did not find it in the literature stated at this level of generality (tipically one assumes some extra condition like measure doubling and gets a stronger statement, namely equality in the co-area formula; see for instance \cite{MirandaJr}) we add it for the reader's convenience.
\\Let $E_{t}:= \{u> t \}$ and set $V(t):=\int_{E_{t}} |\nabla u|\, d\mm$. The function $t\mapsto V(t)$ is non-increasing and bounded, thus differentiable for $\mathcal{L}^{1}$-a.e. $t>0$.
\\Since $\int_{X} u \, d\mm<\infty$, we also have that $\mm(\{u=t\})=0$ for  $\mathcal{L}^{1}$-a.e. $t>0$.

Fix $t>0$  a differentiability point for $V$ for which $\mm(\{u=t\})=0$,  and define $\psi:(0,t) \times (0,\infty) \to [0,1]$ as
\begin{equation}\label{eq:defpsi}
\psi(h,s):= 
\begin{cases} 0 &\quad \text{for }  s\leq  t-h \\
\frac{1}{h}(s-t)+1 &\quad \text{for }  t-h <s\leq t  \\
1 & \quad \text{for } s> t.
\end{cases}
\end{equation}
For $h>0$ define $u_{h}(x)=\psi(h, u(x))$ and observe that the sequence $(u_{h})_{h}\subset {\mathsf {Lip}}_{b}(X)$. 
\\We first claim that
\begin{equation}\label{eq:uhtoEt}
u_{h}\to \chi_{E_{t}}\quad \text{in } L^{1}(X,\mm) \quad \text{as } h\downarrow 0. 
\end{equation}
Indeed
\begin{align*}
\int_{X} |u_{h}-\chi_{E_{t}}|\, d\mm&=  \int_{\{ t-h<u\leq t\}} \psi(h,u)  \, d\mm \\
&\leq  \mm\left(\left\{ t-h<u\leq t \right\} \right) \to \mm(\{u=t\})=0 \quad \text{ as } h\downarrow 0,
\end{align*}
by Dominated Convergence Theorem, since by assumption $u$ has bounded support, $\mm$ is finite on bounded sets and $\chi_{\{ t-h<u\leq t\}} \to \chi_{\{u=t\}}$ pointwise as   $h\downarrow 0$.
\\In order to prove that $E_{t}$ is a set of finite perimeter it is then sufficient to show that  $\limsup_{h\downarrow 0} \int_{X} |\nabla u_{h}|\, d\mm<\infty$. To this aim observe that
$$
 \int_{X} |\nabla u_{h}|\, d\mm =  \frac{1} {h} \int_{{\{ t-h<u\leq t\}}} |\nabla u|  \, d\mm= \frac{V(t-h)-V(t)}{h}.
$$
Since by assumption $t>0$ is a differentiability point for $V$, we obtain that $E_{t}$ is a finite perimeter set satisfying
\begin{equation}\label{eq:V't}
\Per(E_{t}) \leq \lim_{h\downarrow 0}  \int_{X} |\nabla u_{h}|\, d\mm=- V'(t).
\end{equation}
Using that \eqref{eq:V't} holds for  $\mathcal{L}^{1}$-a.e. $t>0$ and that $V$ is non-increasing, we get
\begin{equation}\label{eq:coarea2}
\int_{0}^{M} \Per(E_{t}) \, d t \leq - \int_{0}^{M} V'(t) \, d t   \leq V(0)-V(M)=\int_{X} |\nabla u| \, d\mm.
\end{equation}
 
\end{proof}

\begin{theorem}[Cheeger's Inequality in metric measure spaces]\label{thm:Cheeger}
Let $(X,\mathsf{d})$ be a complete metric space and let $\mm$ be a non-negative Borel measure finite on bounded subsets.
\begin{enumerate}
\item If $\mm(X)<\infty$ then
\begin{equation}\label{eq:Cheeger1}
\lambda_{1}\geq \frac{1}{4} h(X)^{2}.
\end{equation}
\item  If $\mm(X)=\infty$ then
\begin{equation}\label{eq:Cheeger0}
\lambda_{0}\geq \frac{1}{4} h(X)^{2}.
\end{equation}
\end{enumerate}
\end{theorem}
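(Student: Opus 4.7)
The plan is to follow Cheeger's classical strategy, whose essential ingredients — the coarea inequality of Proposition \ref{thm:coarea}, the definition of $h(X)$, and Cauchy--Schwarz — are already in place.

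For the case $\mm(X)=\infty$, I would proceed directly. Given $f\in\mathsf{Lip}_b(X)$ with $f\not\equiv 0$, apply Proposition \ref{thm:coarea} to $u:=f^{2}\in\mathsf{Lip}_b(X)$. Using the $\limsup$ definition of the slope, a short computation gives $|\nabla f^{2}|\leq 2|f|\,|\nabla f|$. Since the superlevel sets $\{f^{2}>t\}$ are contained in $\mathrm{supp}(f)$, they have finite measure, and the definition of $h(X)$ yields $\mathrm{Per}(\{f^{2}>t\})\geq h(X)\,\mm(\{f^{2}>t\})$. Integrating in $t$, using the layer-cake identity, and combining with the slope bound on $f^{2}$, I obtain
\[
2\int_{X}|f|\,|\nabla f|\,d\mm\;\geq\;h(X)\int_{X}f^{2}\,d\mm.
\]
Cauchy--Schwarz on the left-hand side produces $\int_{X}|\nabla f|^{2}\,d\mm\geq \tfrac{1}{4}h(X)^{2}\int_{X}f^{2}\,d\mm$, and passing to the infimum in \eqref{caratterizzazione lambda_0} yields \eqref{eq:Cheeger0}.

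For the case $\mm(X)<\infty$, fix $f\in\mathsf{Lip}_b(X)$ with $\int_{X}f\,d\mm=0$, and choose a \emph{median} $c\in\mathbb{R}$ of $f$, namely a value satisfying $\mm(\{f>c\}),\,\mm(\{f<c\})\leq\mm(X)/2$; such a $c$ exists as $c:=\inf\{t:\mm(\{f\leq t\})\geq\mm(X)/2\}$. Setting $g:=f-c$, I have $|\nabla g|=|\nabla f|$ and, by the vanishing mean of $f$, $\int_{X}g^{2}\,d\mm=\int_{X}f^{2}\,d\mm+c^{2}\mm(X)\geq\int_{X}f^{2}\,d\mm$. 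Decompose $g=g_{+}-g_{-}$ with $g_{\pm}\geq 0$. The crucial observation is that $\{g_{+}^{2}>t\}\subseteq\{f>c\}$ and $\{g_{-}^{2}>t\}\subseteq\{f<c\}$ have measure at most $\mm(X)/2$, so the Cheeger constant applies to both families. Running the same coarea + Cheeger + Cauchy--Schwarz argument as above separately on $g_{+}^{2}$ and $g_{-}^{2}$, and using that $g_{+}=0$ on $\{g\leq 0\}$ while $g_{-}=0$ on $\{g\geq 0\}$ (so the relevant Cauchy--Schwarz integrals localize on $\{g>0\}$ and $\{g<0\}$), I obtain
\[
\int_{\{g>0\}}|\nabla g|^{2}\,d\mm\geq\tfrac{1}{4}h(X)^{2}\!\int_{X}g_{+}^{2}\,d\mm,\qquad \int_{\{g<0\}}|\nabla g|^{2}\,d\mm\geq\tfrac{1}{4}h(X)^{2}\!\int_{X}g_{-}^{2}\,d\mm.
\]
Adding these and using $|\nabla g|=|\nabla f|$ together with $\int g^{2}\,d\mm\geq\int f^{2}\,d\mm$ gives $\int_{X}|\nabla f|^{2}\,d\mm\geq \tfrac{1}{4}h(X)^{2}\int_{X}f^{2}\,d\mm$, and \eqref{eq:Cheeger1} follows by passing to the infimum in \eqref{eq:defla1}.

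The main technical obstacle is that $g_{\pm}$ generally \emph{fail to have bounded support} (as $g$ does whenever $c\neq 0$), while Proposition \ref{thm:coarea} is stated only for $u\in\mathsf{Lip}_b(X)$. I would handle this by noting that when $\mm(X)<\infty$ the proof of Proposition \ref{thm:coarea} goes through verbatim for any bounded, nonnegative, Lipschitz $u$: the function $V(t)=\int_{\{u>t\}}|\nabla u|\,d\mm$ is bounded by $\mathrm{Lip}(u)\mm(X)<\infty$, and $\mm(\{u=t\})=0$ for a.e. $t$ still follows from $\int_{X}u\,d\mm\leq\|u\|_{\infty}\mm(X)<\infty$; these are the only consequences of bounded support used in that proof. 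Alternatively, one can apply the proposition to $g_{\pm}^{2}\cdot\phi_{n}$ for a Lipschitz cutoff sequence $\phi_{n}$ with bounded support (noting the product lies in $\mathsf{Lip}_b(X)$) and pass to the limit. Either route closes the argument.
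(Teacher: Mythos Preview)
Your proof is correct and follows essentially the same approach as the paper: subtract a median, apply the coarea inequality (Proposition~\ref{thm:coarea}) to the squares of the positive and negative parts, invoke the definition of $h(X)$ on the superlevel sets, and conclude via Cauchy--Schwarz together with the fact that the mean minimises $c\mapsto\int_X|f-c|^2\,d\mm$. The only organisational difference is that you apply Cauchy--Schwarz separately on $\{g>0\}$ and $\{g<0\}$ and then add, whereas the paper combines first (using $|f^+|+|f^-|=|f-m|$) and applies Cauchy--Schwarz once globally; both routes give the same bound. Your observation about the bounded-support hypothesis in Proposition~\ref{thm:coarea} is well taken --- the paper applies the proposition to $(f^{\pm})^2$ without commenting on this --- and your proposed fix (either noting that the proof of the proposition goes through for bounded nonnegative Lipschitz $u$ when $\mm(X)<\infty$, or using a cutoff) is exactly right.
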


As proved by Buser \cite{Buser78}, the constant $1/4$ in \eqref{eq:Cheeger1} is  optimal  in the following sense: for any $h > 0$ and $\varepsilon>0$, there exists a closed (i.e. compact without boundary) two-dimensional Riemannian manifold $(M,g)$ with $h(M)=h$ and such that $\lambda_{1}\leq  \frac 1 4 h(M)^{2}+\varepsilon$.

\begin{proof}
We give a proof of \eqref{eq:Cheeger1}, the arguments for showing  \eqref{eq:Cheeger0} being analogous (and even simpler).
\\By the very definition of $\lambda_{1}$ as in \eqref{eq:defla1Intro}, for every $\varepsilon>0$ there exists $f\in \mathsf{Lip}_{b}(X)$ with $\int_X f\, d\mm=0$,  $f\not\equiv 0$ such that
\begin{equation}\label{eq:PfCh1}
\lambda_1\geq \frac{\int_X |\nabla f|^2 \, d\mm}{\int_X f^2 \, d\mm} -\varepsilon.
\end{equation}
Let $m$ be any median of the function $f$ and set $f^+:=\max \{f-m,0\}$, $f^-:=-\min \{f-m,0\}$. 
Applying the co-area inequality \eqref{eq:coarea} to $u=(f^+)^{2}$ (respectively $(f^-)^2$) and recalling the definition of Cheeger's constant $h(X)$ as in  \eqref{eq:defChConst}, we obtain
\begin{align}\label{eq:PfCh3}
&\int_X |\nabla (f^+)^{2}| \, d\mm + \int_X |\nabla (f^-)^{2}| \, d\mm  \\
&\geq\int_{0}^{\sup \{(f^+)^{2}\}}  \Per(\{(f^+)^{2}>t\}) \, d t +\int_{0}^{\sup \{(f^-)^{2}\}}  \Per(\{(f^-)^{2}>t\}) \, d t  \nonumber \\ 
&\geq h(X) \int_{0}^{\sup \{(f^+)^{2}\}}   \mm(\{(f^+)^{2}>t\}) \, d t +h(X) \int_{0}^{\sup \{(f^-)^{2}\}}   \mm(\{(f^-)^{2}>t\}) \, d t  \nonumber \\
&=h(X)\int_X  (f^+)^{2}  d\mm+h(X) \int_X (f^-)^2  d\mm = h(X)  \int_{X} |f-m|^{2} \, d\mm. \nonumber
\end{align}
Since 
$$|\nabla g^{2}|\leq 2 |g| \, |\nabla g|,$$
and 
$$|\nabla f^+|\leq |\nabla f|, \ \  |\nabla f^-|\leq |\nabla f|,$$
we can apply the Cauchy-Schwarz inequality and get
\begin{equation}\label{eq:PfCh4}
2\bigg( \int_X |\nabla f|^2 \, d\mm \bigg)^{\frac{1}{2}}\bigg( \int_X |f-m|^2 \, d\mm \bigg)^{\frac{1}{2}}\geq \int_X |\nabla (f^+)^{2}| \, d\mm + \int_X |\nabla (f^-)^{2}| \, d\mm,
\end{equation}
where we have used that  $|f^+| + |f^-|=|f-m|$.
It follows from \eqref{eq:PfCh3} and \eqref{eq:PfCh4} that for every median $m$ of $f$ it holds
\begin{equation}
\frac{\int_X |\nabla f|^2 \, d\mm}{\int_X |f-m|^2 \, d\mm}\geq \frac{h(X)^2}{4}.
\end{equation}
Finally, since $\int_X f d\mm=0$ and the mean minimises $\R\ni c\mapsto \int_X |f-c|^2 d\mm$, we have
$$\frac{\int_X |\nabla f|^2 \, d\mm}{\int_X |f|^2 \, d\mm}\geq \frac{\int_X |\nabla f|^2 \, d\mm}{\int_X |f-m|^2 \, d\mm} $$ 
and we can conclude thanks to \eqref{eq:PfCh1} and the fact that $\varepsilon>0$ is arbitrary.
\end{proof}

\end{document}